\newcommand{\norm}[1]{\left\Vert#1\right\Vert}
\newcommand{\abs}[1]{\left\vert#1\right\vert}
\newcommand{\Y}{\mathcal{Y}}
\newcommand{\X}{\mathcal{X}}
\newcommand{\D}{\mathcal{D}}
\newcommand{\ds}{\displaystyle}
\newcommand{\s}{\scriptscriptstyle}
\newcommand{\W}{W_{\s 0, \theta}}
\newcommand{\chig}{\scalebox{1.3}{$\chi$}}
 \newtheorem{thm}{Theorem}[section]
 \newtheorem{lem}[thm]{Lemma}
 \newtheorem{defn}[thm]{Definition}
 \newtheorem{rem}[thm]{Remark}
\begin{document}

\title{Mixed spatially varying $L^2$-BV regularization of inverse ill-posed problems 
}
\author{Gisela L. Mazzieri\thanks{Instituto de Matem\'{a}tica Aplicada del Litoral, IMAL,
CONICET-UNL, G\"{u}emes 3450, S3000GLN, Santa Fe, Argentina, Departamento de Matem\'{a}tica,
 Facultad de Bioqu\'{\i}mica y Ciencias Biol\'{o}gicas, Universidad Nacional del Litoral, Santa Fe, Argentina ({\tt glmazzieri@santafe-conicet.gov.ar}).}
\and Ruben D. Spies$^\text{\Envelope,\,}$\thanks{Instituto de
Matem\'{a}tica Aplicada del Litoral, IMAL, CONICET-UNL, G\"{u}emes 3450,
S3000GLN, Santa Fe, Argentina and Departamento de Matem\'{a}tica,
Facultad de Ingenier\'{\i}a Qu\'{\i}mica, Universidad Nacional del Litoral,
Santa Fe, Argentina (\Envelope\,: {\tt rspies@santafe-conicet.gov.ar}).}
\and Karina G. Temperini \thanks{Instituto de Matem\'{a}tica Aplicada del Litoral, IMAL,
CONICET-UNL, G\"{u}emes 3450, S3000GLN, Santa Fe, Argentina, Departamento de Matem\'{a}tica,
 Facultad de Humanidades y Ciencias , Universidad Nacional del Litoral, Santa Fe, Argentina ({\tt ktemperini@santafe-conicet.gov.ar}).}}

%
%
%

\maketitle
\begin{abstract} Several generalizations of the traditional Tikhonov-Phillips regularization method
have been proposed during the last two decades. Many of these generalizations are based upon inducing stability
throughout the use of different penalizers which allow the capturing of diverse properties of the exact solution
(e.g.\;edges, discontinuities, borders, etc.). However, in some problems in which it is known that the regularity of
the exact solution is heterogeneous and/or anisotropic, it is reasonable to think that a much better option could be
the simultaneous use of two or more penalizers of different nature. Such is the case, for instance, in some image
restoration problems in which preservation of edges, borders or discontinuities is an important matter. In this
work we present some results on the simultaneous use of penalizers of $L^2$ and of bounded variation (BV) type. For
particular cases, existence and uniqueness results are proved. Open problems are discussed and results to signal  restoration problems are presented.
\end{abstract}

{\thispagestyle{empty}} 

\section{Introduction and preliminaries}

For our general setting we consider the problem of finding $u$ in an equation of the form
\begin{equation}\label{eq:prob-inv}
Tu=v,
\end{equation}
where $T:\X\rightarrow \Y$ is a bounded linear operator between two infinite dimensional Hilbert spaces  $\X$ and
$\Y$, the range of $T$ is non-closed and $v$ is the data, which is supposed to be known, perhaps with a certain
degree of error. In the sequel and unless otherwise specified, the space $\X$ will be $L^2(\Omega)$ where $\Omega\subset\mathbb{R}^n$ is a bounded open convex set with Lipschitz boundary.
It is well known that under these hypotheses problem (\ref{eq:prob-inv}) is ill-posed in the sense of Hadamard (\cite{ref:Hadamard-1902}) and it must
be regularized before any attempt to approximate its solutions is made (\cite{refb:Engl-Hanke-96}). The most usual way of
regularizing a problem is by means of the use of the \textit{Tikhonov-Phillips regularization method} whose general
formulation can be given within the context of an unconstrained optimization problem. In fact, given an appropriate
penalizer $W(u)$ with domain $\mathcal D\subset \X$, the regularized solution obtained by the Tikhonov-Phillips
method and such a penalizer, is the minimizer $u_\alpha$, over $\mathcal D$, of the functional
\begin{equation}\label{eq:formulacion W}
J_{\alpha,W}(u)= \norm{Tu-v}^2+\alpha W(u),
\end{equation}
where $\alpha$ is a positive constant called regularization parameter. For general penalizers $W$, sufficient
conditions guaranteeing existence, uniqueness and weak and strong stability of the minimizers under different types
of perturbations where found in \cite{ref:Mazzieri-Spies-Temperini-JMAA-2012}.

Each choice of an appropriate penalizer $W$ originates a different regularization method producing a particular
regularized solution possessing particular properties. Thus, for instance, the choice of $W(u)=\norm{u}^2_{\scriptscriptstyle L^2(\Omega)}$ gives
raise to the classical Tikhonov-Phillips method of order zero  producing always smooth regularized approximations which approximate, as $\alpha\to 0^+$, the best approximate solution (i.e. the least squares solution of minimum norm) of problem (\ref{eq:prob-inv}) (see \cite{refb:Engl-Hanke-96}) while for $W(u)=\norm{|\nabla u|\,}^2_{\scriptscriptstyle L^2(\Omega)}$ the order-one Tikhonov-Phillips method is obtained. Similarly, the choice of $W(u)=\norm{u}_{\scriptscriptstyle \text{BV}(\Omega)}$ (where $\norm{\cdot}_{\scriptscriptstyle \text{BV}}$ denotes the total variation norm) or $W(u)=\norm{|\nabla u|\,}_{\scriptscriptstyle L^1(\Omega)}$, result in the so called ``bounded variation regularization methods'' (\cite{ref:Acar-Vogel-94}, \cite{ref:Rudin-Osher-Fatemi-1992}). The use of these penalizers is  appropriate when preserving discontinuities or edges is an important matter. The method, however, has as a drawback that it tends to produce piecewise constant approximations and therefore, it will
most likely be inappropriate in regions where the exact solution is smooth (\cite{ref:Chambolle-Lions-97}) producing the so called
``staircasing effect".

In certain types of problems, particularly in those in which it is known that the regularity of the exact solution
is heterogeneous and/or anisotropic, it is reasonable to think that using and spatially adapting two or more penalizers
of different nature could be more convenient. During the last 15 years several regularization methods have been
developed in light of this reasoning. Thus, for instance, in 1997 Blomgren \textit{et al.} (\cite{ref:Blomgren-etal-1997}) proposed
the use of the following penalizer, by using variable $L^p$ spaces:
\begin{equation} \label{eq:penalizante Blomgren}
W(u)=\int_\Omega |\nabla u|^{p(|\nabla u|)} dx,
\end{equation}
where $\underset{u\rightarrow 0^+}{\lim}p(u)=2$, $\underset{u\rightarrow \infty}{\lim}p(u)=1$ and $p$ is a decreasing
function. Thus, in regions where the modulus of the gradient of $u$ is small the penalizer is approximately equal
to $\norm{\abs{\nabla u}}_{L^2(\Omega)}^2$ corresponding to a Tikhonov-Phillips method of order one (appropriate for
restoration in smooth regions). On the other hand, when the modulus of the gradient of $u$ is large, the
penalizer resembles the bounded variation seminorm $\norm{\abs{\nabla u}}_{L^1(\Omega)}$, whose use, as mentioned
earlier, is highly appropriate for border detection purposes. Although this model for $W$ is quite reasonable,
proving basic properties of the corresponding generalized Tikhonov-Phillips functional turns out to be quite
difficult. A different way of combining these two methods was proposed by Chambolle and Lions (\cite{ref:Chambolle-Lions-97}). They
suggested the use of a thresholded penalizer of the form:
\begin{equation*}
    W_\beta(u)=\int_{|\nabla u|\leq \beta}\left|\nabla u\right|^{2} dx +\int_{|\nabla u|> \beta}\left|\nabla u\right|
    dx,
\end{equation*}
where $\beta>0$ is a prescribed threshold parameter. Thus, in regions where borders are more likely to be present ($|\nabla u|>
\beta$), penalization is made with the bounded variation seminorm while a standard order-one Tikhonov-Phillips
method is used otherwise. This model was shown to be successful in restoring images possessing regions with
homogeneous intensity separated by borders. However, in the case of images with non-uniform or highly degraded
intensities, the model is extremely sensitive to the choice of the threshold parameter $\beta$. More recently, penalizers of
the form
\begin{equation}
W(u)=\int_\Omega |\nabla u|^{p(x)} dx,
\end{equation}
for certain functions $p$ with range in $[1,2]$, were studied in \cite{ref:Chen-Levine-Rao-2006} and \cite{ref:Li-Li-Pi-2010}. It is timely to point
out here that all previously mentioned results work only for the case of denoising, i.e. for the case $T=id$.

In this work we propose the use of a model for general restoration problems, which combines, in an appropriate way,
the penalizers corresponding to a zero-order Tikhonov-Phillips method and the bounded variation seminorm. Although
several mathematical issues for this model still remain open, its use in some signal and image restoration problems has already proved
to be very promising. The purpose of this article is to introduce the model, show mathematical results
regarding the existence of the corresponding regularized solutions, and present some results of its application to
signal restoration.

The following Theorem, whose proof can be found in \cite{ref:Acar-Vogel-94} (Theorem 3.1),
guarantees the well-posedness of the unconstrained minimization problem
\begin{equation}\label{eq: funcional AV}
u^\ast\;=\;\underset{u\in L^p(\Omega)}{\rm argmin}\; J(u).
\end{equation}
\begin{thm}\label{teo:existencia AV}
Let $J$ be a  $BV$-coercive functional defined on $L^p(\Omega)$. If $1\leq p<\frac{n}{n-1}$ and $J$ is lower
semicontinuous, then problem (\ref{eq: funcional AV}) has a solution. If $p=\frac{n}{n-1}$, $n\geq 2$ and in
addition  $J$ is weakly lower semicontinuous, then a solutions also exists. In either case, the solution is unique
if $J$ is strictly convex.
\end{thm}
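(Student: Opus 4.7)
The plan is to apply the direct method of the calculus of variations, using $BV$-coercivity to secure compactness and then semicontinuity to close the argument.

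First, I would let $\{u_k\} \subset L^p(\Omega)$ be a minimizing sequence, i.e.\ $J(u_k) \to \inf_{u \in L^p(\Omega)} J(u) =: m$. Since any reasonable formulation of the problem assumes $m < \infty$ (otherwise every $u$ is trivially a minimizer), the values $J(u_k)$ are eventually bounded above. By the $BV$-coercivity of $J$ (i.e.\ $J(u) \to \infty$ whenever $\|u\|_{BV(\Omega)} \to \infty$), this forces $\sup_k \|u_k\|_{BV(\Omega)} < \infty$.

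Second, I would invoke the embedding theorems for $BV$ on bounded open sets $\Omega \subset \mathbb{R}^n$ with Lipschitz boundary. Recall that $BV(\Omega) \hookrightarrow L^{n/(n-1)}(\Omega)$ continuously, and $BV(\Omega) \hookrightarrow\hookrightarrow L^p(\Omega)$ compactly for every $1 \le p < n/(n-1)$. In the subcritical case $1 \le p < n/(n-1)$, compactness yields a subsequence (not relabeled) and a limit $u^\ast \in L^p(\Omega)$ such that $u_k \to u^\ast$ strongly in $L^p(\Omega)$; lower semicontinuity of $J$ then gives $J(u^\ast) \le \liminf_k J(u_k) = m$, so $u^\ast$ is a minimizer. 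In the critical case $p = n/(n-1)$, $n \ge 2$, the embedding is only continuous, but the $BV$-bound combined with reflexivity arguments for $L^p$ (note $p > 1$ here) provides a weakly convergent subsequence $u_k \rightharpoonup u^\ast$ in $L^p(\Omega)$; weak lower semicontinuity of $J$ then yields $J(u^\ast) \le \liminf_k J(u_k) = m$, again producing a minimizer.

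Finally, uniqueness under strict convexity follows by the usual contradiction argument: if $u_1 \ne u_2$ were both minimizers, then strict convexity would give $J\bigl(\tfrac{1}{2}(u_1+u_2)\bigr) < \tfrac{1}{2}J(u_1) + \tfrac{1}{2}J(u_2) = m$, contradicting the definition of $m$.

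The main obstacle I anticipate is the critical case $p = n/(n-1)$, where no strong compactness is available from $BV \hookrightarrow L^p$; this is precisely why the hypothesis must be strengthened from lower semicontinuity to weak lower semicontinuity. The subcritical regime is essentially routine once the $BV$-bound on the minimizing sequence is established, since strong $L^p$-convergence meshes immediately with ordinary lower semicontinuity.
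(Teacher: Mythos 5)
The paper does not prove this theorem itself --- it is quoted verbatim from Acar and Vogel (Theorem~3.1 of \cite{ref:Acar-Vogel-94}) --- and the argument there is exactly the direct method you describe: $BV$-coercivity bounds a minimizing sequence in $BV(\Omega)$, the compact embedding $BV(\Omega)\hookrightarrow\hookrightarrow L^p(\Omega)$ for $1\le p<\frac{n}{n-1}$ (respectively, the continuous embedding into $L^{n/(n-1)}(\Omega)$ plus weak sequential compactness of bounded sets in the reflexive space $L^{n/(n-1)}(\Omega)$) extracts a strongly (respectively, weakly) convergent subsequence, and the corresponding form of lower semicontinuity together with strict convexity yields existence and uniqueness. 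Your proposal is correct and takes essentially the same route, including the correct identification of why the critical case $p=\frac{n}{n-1}$ forces the upgrade to weak lower semicontinuity.
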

The following theorem, whose proof can also be found in \cite{ref:Acar-Vogel-94} (Theorem 4.1), is very important for the
existence and uniqueness of minimizers of functionals of the form
\begin{equation}\label{eq:funcional-JOTA}
J(u)\;=\;\|Tu-v\|^2+ \alpha J_{\s 0}(u),
\end{equation}
where $\alpha>0$ and $J_{\s 0}(u)$ denotes the bounded variation seminorm given by
\begin{equation}\label{eq:funcional-JOTA-cero}
J_{\s 0}(u)=\underset{\vec\nu\in\mathcal V}{\sup}\int_{\Omega}-u\, \text{div}\vec\nu\, dx,
\end{equation}
with $\mathcal V\doteq\{\vec\nu: \Omega \rightarrow\mathbb R^n \text{ such that } \vec \nu\in C^{\s 1}_{\s
0}(\Omega) \text{ and } |\vec\nu (x)|\leq 1 \,\forall\, x\in\Omega\}.$
\begin{thm}\label{teo:existencia-JOTA} Suppose that $p$ satisfies the restrictions of Theorem \ref{teo:existencia
AV} and $T\chig_\Omega\ne 0$. Then $J(\cdot)$ defined by (\ref{eq:funcional-JOTA}) is $BV$-coercive.
\end{thm}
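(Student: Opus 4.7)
The goal is to show that every sublevel set $\{u : J(u) \le M\}$ is bounded in $BV(\Omega)$. Fix $M>0$ and let $u$ satisfy $J(u)\le M$. Since $\|Tu-v\|^2\ge 0$ and $\alpha>0$, the estimate $J_0(u)\le M/\alpha$ is immediate, so it suffices to produce a bound on $\|u\|_{L^1(\Omega)}$ (and hence on the full $BV$-norm) depending only on $M$, $\alpha$, $v$, $T$ and $\Omega$. From $\|Tu-v\|\le\sqrt{M}$ we also record $\|Tu\|\le\sqrt{M}+\|v\|$.

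The strategy is to split $u$ into its mean and its oscillation. Set $\bar u\doteq\frac{1}{|\Omega|}\int_\Omega u\,dx$ and write $u=\bar u\,\chig_\Omega+(u-\bar u\,\chig_\Omega)$. The Poincar\'e--Wirtinger inequality for $BV$-functions on a bounded convex Lipschitz domain yields
\[
\|u-\bar u\,\chig_\Omega\|_{L^p(\Omega)}\;\le\;C_\Omega\,J_{\s 0}(u)\;\le\;\frac{C_\Omega\,M}{\alpha},
\]
valid for the admissible range $1\le p\le n/(n-1)$; this is exactly the step where Theorem \ref{teo:existencia AV}'s restriction on $p$ is used (it is the Sobolev embedding $BV(\Omega)\hookrightarrow L^{n/(n-1)}(\Omega)$). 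The oscillatory part of $u$ is thus controlled in $L^p$ purely in terms of $J_{\s 0}(u)$.

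To control the mean $\bar u$, apply $T$ to the decomposition and rearrange:
\[
\bar u\,T\chig_\Omega \;=\; Tu\;-\;T(u-\bar u\,\chig_\Omega).
\]
The first term on the right has norm bounded by $\sqrt{M}+\|v\|$, and the second has norm bounded by $\|T\|\,C_\Omega M/\alpha$ via continuity of $T$ on the relevant space (using, where needed, the embedding of $BV$ into $L^{n/(n-1)}\subset L^p$). Taking norms and invoking the crucial hypothesis $T\chig_\Omega\ne 0$ gives
\[
|\bar u|\;\le\;\frac{1}{\|T\chig_\Omega\|}\bigl(\|Tu\|+\|T(u-\bar u\,\chig_\Omega)\|\bigr),
\]
which bounds $|\bar u|$ and therefore $\|\bar u\,\chig_\Omega\|_{L^p(\Omega)}=|\bar u|\,|\Omega|^{1/p}$. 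Combining with the oscillation bound we get $\|u\|_{L^p(\Omega)}$ bounded, and hence $\|u\|_{L^1(\Omega)}\le|\Omega|^{1-1/p}\|u\|_{L^p(\Omega)}$ bounded, which together with the already established bound on $J_{\s 0}(u)$ yields $BV$-coercivity.

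The main obstacle, and the reason the hypothesis $T\chig_\Omega\ne 0$ is indispensable, is precisely the decoupling of $\bar u$ from the oscillation: since constants lie in the kernel of $J_{\s 0}$, the seminorm alone cannot detect the mean of $u$, so the only mechanism available to bound $\bar u$ is the data-fidelity term acting on $\chig_\Omega$. If $T\chig_\Omega=0$, then adding any constant to $u$ leaves $J(u)$ unchanged, and coercivity fails. The remaining technical point is to make rigorous the continuity of $T$ on the oscillation $u-\bar u\,\chig_\Omega$ in the $L^p$-setting of Theorem \ref{teo:existencia AV}; this is routine given the $BV$-Sobolev embedding and the standing assumption that $T$ is bounded on $\X=L^2(\Omega)$.
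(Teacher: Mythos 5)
Your proposal is correct and follows essentially the same route as the proof the paper relies on (Acar--Vogel, Theorem 4.1): decompose $u$ into its mean $\bar u\,\chig_\Omega$ plus a zero-mean oscillation, control the oscillation in $L^p$ via the Poincar\'e--Sobolev inequality for $BV$ (this is where the restriction $p\le n/(n-1)$ enters), and use $T\chig_\Omega\ne 0$ together with the fidelity term to bound $|\bar u|$. The only point worth tightening is that the continuity estimate $\|T(u-\bar u\,\chig_\Omega)\|\le\|T\|\,\|u-\bar u\,\chig_\Omega\|_{L^p(\Omega)}$ should invoke boundedness of $T$ on $L^p(\Omega)$ (the standing hypothesis in the $L^p$ framework of Theorem \ref{teo:existencia AV}) rather than on $L^2(\Omega)$, but you flag this yourself and it does not affect the argument.
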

Note here that (\ref{eq:funcional-JOTA}) is a particular case of (\ref{eq:formulacion W}) with $W(u)=J_{\s 0}(u)$. The
following theorem, whose proof can be found in \cite{ref:Mazzieri-Spies-Temperini-JMAA-2012}, gives conditions
guaranteeing existence and uniqueness of minimizers of (\ref{eq:formulacion W}) for general penalizers $W(u)$. This
theorem will also be very important for our main results in the next section.
\begin{thm}\label{teo:existencia JMAA} Let $\X,\,\Y$ be normed vector spaces,
$T\in\mathcal L(\X,\Y)$, $v\in\Y,$ $\D\subset \X$ a convex set and
$W:\mathcal{D}\longrightarrow \mathbb R$ a functional bounded from
below, $W$-subsequentially weakly lower semicontinuous, and such
that $W$-bounded sets are relatively weakly compact in $\X$. More
precisely, suppose that $W$ satisfies the following hypotheses:
\begin{itemize}
\item \textit{(H1): }$\exists \,\,\gamma\geq 0$ such that $W(u)\geq
-\gamma\hspace{0.3cm} \forall\,u\in\mathcal D$.
\item \textit{(H2):\;}for every $W$-bounded sequence $\{u_n\}\subset\D$  such that
$u_{n}\overset{w}{\rightarrow} u\in\D,$ there exists a subsequence
$\{u_{n_j}\}\subset \{u_n\}$ such that $
W(u)\leq\liminf_{j\rightarrow\infty}W(u_{n_j})$.
\item \textit{(H3):} for every $W$-bounded sequence
$\{u_n\} \subset \mathcal D$ there exist a subsequence
$\{u_{n_j}\}\subset \{u_n\}$ and $u\in \D$ such that
$u_{n_j}\overset{w}{\rightarrow} u$.
\end{itemize}
Then the functional $J_{W,\alpha}(u)\doteq \|Tu-v\|^2+\alpha W(u)$ has a global minimizer on $\mathcal{D}$. If moreover $W$ is
convex and $T$ is injective or if $W$ is strictly convex, then such a minimizer is unique.
\end{thm}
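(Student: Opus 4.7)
The plan is to prove existence by the direct method of the calculus of variations, tailored to the hypotheses (H1)--(H3). First I would note that by (H1) the functional is bounded below: $J_{W,\alpha}(u)\geq -\alpha\gamma$ for every $u\in\mathcal D$, so $m\doteq\inf_{\mathcal D}J_{W,\alpha}\in\mathbb R$. Pick a minimizing sequence $\{u_n\}\subset\mathcal D$ with $J_{W,\alpha}(u_n)\to m$. Since $\|Tu_n-v\|^2\geq 0$, we get $\alpha W(u_n)\leq J_{W,\alpha}(u_n)$, which is eventually bounded above; combined with (H1) this shows $\{u_n\}$ is $W$-bounded.

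Next I would apply (H3) to extract a subsequence $\{u_{n_j}\}$ and some $u^\ast\in\mathcal D$ with $u_{n_j}\overset{w}{\rightarrow}u^\ast$. Then I would apply (H2) to this subsequence (which is still $W$-bounded) to obtain a further subsequence, still denoted $\{u_{n_j}\}$, satisfying $W(u^\ast)\leq\liminf_{j\to\infty}W(u_{n_j})$. Since $T\in\mathcal L(\X,\Y)$ is weak-to-weak continuous, $Tu_{n_j}\overset{w}{\rightarrow}Tu^\ast$, so $Tu_{n_j}-v\overset{w}{\rightarrow}Tu^\ast-v$, and by weak lower semicontinuity of the norm $\|Tu^\ast-v\|^2\leq\liminf_{j\to\infty}\|Tu_{n_j}-v\|^2$. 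Adding the two inequalities gives
\[
J_{W,\alpha}(u^\ast)\;\leq\;\liminf_{j\to\infty}\bigl(\|Tu_{n_j}-v\|^2+\alpha W(u_{n_j})\bigr)\;=\;\lim_{j\to\infty}J_{W,\alpha}(u_{n_j})\;=\;m,
\]
so $u^\ast$ is a global minimizer on $\mathcal D$.

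For uniqueness, if $W$ is strictly convex, then $\alpha W$ is strictly convex and the map $u\mapsto\|Tu-v\|^2$ is convex (composition of a linear map with a convex function), so $J_{W,\alpha}$ is strictly convex on the convex set $\mathcal D$ and the minimizer is unique. In the alternative case, $W$ is (merely) convex but $T$ is injective; then for distinct $u_1,u_2$ one has $Tu_1\neq Tu_2$ and I would invoke strict convexity of $\|\cdot-v\|^2$ to transfer strict convexity to $u\mapsto\|Tu-v\|^2$, making the sum strictly convex. The only subtle point in the argument is the double subsequence extraction: (H2) does not claim semicontinuity along the given weakly convergent sequence, only along some subsequence, so one must first extract via (H3) and then again via (H2); since a sub-subsequence of a minimizing sequence is still minimizing, the final inequality above remains valid.
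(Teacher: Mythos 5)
Your argument is correct and is precisely the standard direct-method proof that the paper itself omits (it only cites Theorem 2.5 of \cite{ref:Mazzieri-Spies-Temperini-JMAA-2012}): bound $J_{W,\alpha}$ below via (H1), show a minimizing sequence is $W$-bounded, extract weak limits via (H3) and then again via (H2), and combine with weak-to-weak continuity of $T$ and weak lower semicontinuity of the norm; your remark about the double subsequence extraction is exactly the right care to take with (H2). The only fragile point is the uniqueness step under ``$W$ convex and $T$ injective'': strict convexity of $y\mapsto\|y-v\|^2$ fails in a general normed space $\Y$ (it requires $\Y$ strictly convex, e.g.\ a Hilbert space), but this limitation is inherited from the statement itself rather than introduced by your proof.
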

\begin{proof} See  Theorem 2.5 in \cite{ref:Mazzieri-Spies-Temperini-JMAA-2012}.
\end{proof}
\section{Main results}
In this section we will state and prove our main results concerning existence and uniqueness of minimizers of particular
generalized Tikhonov-Phillips functionals with combined spatially-varying $L^2$-BV penalizers. In what follows $\mathcal{M}(\Omega)$ shall denote the set of all real valued measurable functions defined on $\Omega$ and $\widehat{\mathcal{M}}(\Omega)$ the subset of $\mathcal{M}(\Omega)$ formed by those functions with values in $[0,1]$.
\begin{defn} Given $\theta\in \widehat{\mathcal{M}}(\Omega)$ we define the functional $W_{\scriptscriptstyle 0, \theta}(u)$ with values on the extended reals by
\begin{equation}\label{eq:funcional W0theta}
W_{\scriptscriptstyle 0, \theta}(u)\doteq \sup_{\vec\nu\in\mathcal V_{\scriptscriptstyle \theta}}\int_{\Omega}-u\, \text{div}(\theta \vec\nu)\, dx,
\hspace{0.5cm} u \in \mathcal{M}(\Omega)
\end{equation}
where $\mathcal V_{\s \theta}\doteq\{\vec\nu: \Omega\rightarrow\mathbb R^n \text{ such that }
\theta\vec \nu\in C^{\s 1}_{\s 0}(\Omega) \text{ and } |\vec\nu (x)|\leq 1 \,\forall\, x\in\Omega\}.$
\end{defn}
%
\begin{lem}\label{lem: J0 gradiente}
If $u\in C^{\s 1}(\Omega)$ then $W_{\s 0,\theta}(u)=\|\theta \abs{\nabla u}\|_{L^{\s
1}(\Omega)}$.
\end{lem}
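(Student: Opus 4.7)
The plan is to prove the claimed equality as two separate inequalities; the harder direction will be the lower bound $\norm{\theta\abs{\nabla u}}_{L^{\s 1}(\Omega)}\leq W_{\s 0,\theta}(u)$, which requires producing admissible vector fields in $\mathcal V_{\s\theta}$ that nearly saturate the bound.

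For the upper bound, I would fix an arbitrary $\vec\nu\in\mathcal V_{\s\theta}$ and invoke integration by parts. Since $\theta\vec\nu\in C^{\s 1}_{\s 0}(\Omega)$ has compact support inside $\Omega$ and $u\in C^{\s 1}(\Omega)$, there are no boundary contributions and
$$\int_\Omega -u\,\text{div}(\theta\vec\nu)\,dx \;=\; \int_\Omega\nabla u\cdot (\theta\vec\nu)\,dx \;=\; \int_\Omega\theta\,\nabla u\cdot\vec\nu\,dx\;\leq\;\int_\Omega\theta\abs{\nabla u}\,dx,$$
where the last step uses $\theta\geq 0$, $\abs{\vec\nu}\leq 1$ and Cauchy--Schwarz. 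Taking the supremum over $\vec\nu\in\mathcal V_{\s\theta}$ yields $W_{\s 0,\theta}(u)\leq \norm{\theta\abs{\nabla u}}_{L^{\s 1}(\Omega)}$.

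For the reverse inequality, the formal maximizer $\vec\nu^\star\doteq \nabla u/\abs{\nabla u}$, defined on $\{\abs{\nabla u}>0\}$, satisfies $\theta\,\nabla u\cdot\vec\nu^\star=\theta\abs{\nabla u}$ but is far from lying in $\mathcal V_{\s\theta}$. I would approximate it in two stages. First, regularize by $\vec\nu_\varepsilon\doteq \nabla u/\sqrt{\abs{\nabla u}^{\s 2}+\varepsilon^{\s 2}}$, which is continuous on $\Omega$ with $\abs{\vec\nu_\varepsilon}<1$ and, by monotone convergence, satisfies $\int_\Omega \theta\,\nabla u\cdot\vec\nu_\varepsilon\,dx \to \int_\Omega\theta\abs{\nabla u}\,dx$ as $\varepsilon\to 0^+$. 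Second, promote $\vec\nu_\varepsilon$ to an element of $\mathcal V_{\s\theta}$ by multiplying by a smooth cutoff $\chi_k\in C^\infty_c(\Omega)$, with $0\leq\chi_k\leq 1$ increasing to $1$ on $\Omega$, and mollifying so that the product $\theta\vec\nu^{\s (k)}$ falls in $C^{\s 1}_{\s 0}(\Omega)$ while $\abs{\vec\nu^{\s (k)}}\leq 1$ is preserved; a dominated convergence argument then drives the integrals to the desired limit.

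The main obstacle is exactly this second regularization step: marrying the mere measurability of $\theta$ with the requirement that $\theta\vec\nu^{\s (k)}$ be $C^{\s 1}$ and compactly supported, without violating the pointwise bound $\abs{\vec\nu^{\s (k)}}\leq 1$. This is the same difficulty that appears in the classical proof that the $BV$-seminorm of a $C^{\s 1}$ function equals $\int\abs{\nabla u}\,dx$; I would adapt that construction, weaving the weight $\theta$ into the cutoff-mollification scheme and checking that the resulting fields genuinely lie in $\mathcal V_{\s\theta}$ and still saturate the bound in the limit.
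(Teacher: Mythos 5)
Your proof follows essentially the same route as the paper's: integration by parts together with $\abs{\vec\nu}\le 1$ for the upper bound, and for the lower bound the normalized gradient as formal maximizer followed by a cutoff-and-mollification step to land in $\mathcal V_{\s\theta}$, a step which both you and the paper leave at the level of a sketch. Your $\varepsilon$-regularization $\nabla u/\sqrt{\abs{\nabla u}^{\s 2}+\varepsilon^{\s 2}}$ is in fact a minor improvement, since the paper's field $\nabla u/\abs{\nabla u}$ (extended by $0$) need not be continuous at zeros of the gradient, contrary to what is asserted there.
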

\begin{proof}

Let $u\in C^1(\Omega).$ For all $\vec\nu \in \mathcal V_{\s \theta}$ it follows easily that
\begin{align}\label{eq: desig-thetagrad}
\nonumber \ds\int_{\Omega}-u\, \text{div}(\theta \vec\nu)\, dx&=\ds\int_{\Omega}\nabla u \cdot \theta\vec \nu \,
dx - \ds\int_{\delta\Omega}(u\theta\vec\nu \cdot \vec n )\, dS  \\ \nonumber
&=\int_{\Omega}\nabla u\cdot\theta\vec\nu\;dx \qquad (\text{since } \theta \vec\nu|_ {\delta\Omega}=0)   \\ \nonumber
&\leq \ds\int_{\Omega}\abs{\theta\nabla u} \abs{\vec \nu}\, dx\\
&\leq \ds\int_{\Omega}\abs{\theta\nabla u} \, dx \quad (\text{since} \abs{\vec\nu(x)}\leq 1),
\end{align}
where $\vec n$ denotes the outward unit normal to $\delta\Omega$. Taking supremum over $\vec \nu \in \mathcal
V_{\s\theta}$ it follows that
$$W_{\scriptscriptstyle 0, \theta}(u) \leq \|\theta \abs{\nabla u} \|_{ L^1(\Omega)}.$$
For the opposite inequality, define $\vec\nu_\ast(x)\doteq \begin{cases}
 \frac{\nabla u(x)}{\abs{\nabla u(x)}}, &\text{if }\abs{\nabla u(x)}\neq 0,\\
0, &\text{if } \abs{\nabla u(x)}= 0,
\end{cases}$. Then one has that $\abs{\vec\nu_\ast(x)}\leq 1\; \forall \, x \in \Omega$ and $\vec\nu_\ast\in C(\Omega, \mathbb{R}^n)$ since $u\in C^{\s 1}(\Omega).$ Also,
$$\ds\int_{\Omega}(\nabla u \cdot \theta\vec\nu_\ast)\, dx = \ds\int_{\Omega}\abs{\theta\nabla u} \, dx.$$ By convolving $\vec\nu_\ast$ with an appropriately chosen function $\varphi \in C^{\s \infty}_{\s 0}(\Omega, \mathbb{R}^n)$, we can obtain a function $\vec\nu \in \mathcal V_{\s \theta} \cap C^{\s \infty}_{\s 0}(\Omega, \mathbb{R}^n)$ for which the left hand side of (\ref{eq: desig-thetagrad}) is arbitrarily close to $\ds\int_{\Omega}\abs{\theta\nabla u} \, dx.$ Then taking supremum over $\vec\nu \in \mathcal V_\theta$ we have that $$W_{\scriptscriptstyle 0, \theta}(u) \geq \|\theta \abs{\nabla u} \|_{ L^1(\Omega)}.$$ Hence $W_{\scriptscriptstyle 0, \theta}(u) = \|\theta \abs{\nabla u} \|_{ L^1(\Omega)},$ as we wanted to prove.
\end{proof}
\noindent Observation:  From de density of $C^{\s 1}(\Omega)$ in $ W^{\s 1,1}(\Omega)$ it follows that Lemma
\ref{lem: J0 gradiente} holds for every $ u\, \in W^{\s 1,1}(\Omega)$.
%


\begin{rem}
For any $\theta \in \widehat{\mathcal{M}}(\Omega)$, it follows easily that
\begin{equation}\label{eq:desigW-J}
W_{\s 0,\theta}(u)\leq J_{\s 0}(u), \quad \forall \,u \in \mathcal{M}(\Omega).
\end{equation}
In fact, for any $\vec\nu\in \mathcal V_{\s \theta}$ and for any $u \in \mathcal{M}(\Omega)$ we have that
\begin{align}\label{eq:desig-remark}
\int_{\Omega}-u\, \text{div}(\theta \vec\nu)\, dx&\leq \underset {\vec\nu \in \mathcal{V}} {\sup}\int_{\Omega}-u\, \text{div}\,\vec\nu\, dx \\ \nonumber
&=J_{\s 0}(u),
\end{align}
where inequality (\ref{eq:desig-remark}) follows from the fact that $\theta\vec\nu \in \mathcal V$ (since
$|\theta(x)|\leq 1 \;\forall x\in\Omega$).  By taking supremum for $\vec\nu \in \mathcal{V}_\theta$ inequality
(\ref{eq:desigW-J}) follows.
\end{rem}

Although inequality (\ref{eq:desigW-J}) is important by itself since it relates the functionals $W_{\s 0,\theta}$ and $J_{\s 0}$,
in order to be able to use the known coercitivity properties of $J_{\s 0}$ (see \cite{ref:Acar-Vogel-94}), an inequality of the opposite type is highly desired. That is, we would like to show that, under certain conditions on $\theta(\cdot)$, there exists a constant $C=C(\theta)$ such that $W_{\s 0,\theta}(u)\geq CJ_{\s 0}(u)$ for all $u \in \mathcal{M}(\Omega)$. The following theorem provides sufficient conditions on $\theta$ assuring such an inequality.
\begin{thm}\label{teo:desig-WgJ}
Let $\theta \in \widehat{\mathcal{M}}(\Omega)$ be such that $\frac1\theta\in L^\infty(\Omega)$ and
let $J_{\s 0}$, $W_{\s 0,\theta}$ be the functionals defined in (\ref{eq:funcional-JOTA-cero}) and
(\ref{eq:funcional W0theta}), respectively. Then $J_{\s 0}(u) \le \|\frac1\theta\|_{\s L^\infty(\Omega)}\, W_{\s 0,\theta}(u)$
for all $u \in \mathcal{M}(\Omega)$.
\end{thm}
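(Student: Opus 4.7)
The plan is a direct comparison of the two suprema via a rescaling of test fields. Let $c \doteq \|1/\theta\|_{L^\infty(\Omega)}$, so that $\theta(x) \ge 1/c$ for almost every $x \in \Omega$. At the outset I will select a pointwise representative of $\theta$ for which this lower bound holds everywhere; since every candidate in the definition of $W_{\s 0,\theta}$ is constrained through the continuous product $\theta\vec\nu \in C^{\s 1}_{\s 0}(\Omega)$, this choice does not affect the functional $W_{\s 0,\theta}$ itself.

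Given an arbitrary $\vec\nu \in \mathcal V$, the main step is to produce a test field $\vec\mu \in \mathcal V_{\s\theta}$ that realizes $\theta\vec\mu = \vec\nu/c$. The natural candidate is
\[
\vec\mu(x) \;\doteq\; \frac{\vec\nu(x)}{c\,\theta(x)},
\]
which is well-defined pointwise by the lower bound on $\theta$. The verification that $\vec\mu \in \mathcal V_{\s\theta}$ is then immediate: the product $\theta\vec\mu$ coincides with $\vec\nu/c$, which lies in $C^{\s 1}_{\s 0}(\Omega)$ because $\vec\nu$ does, and the uniform bound
\[
|\vec\mu(x)| \;=\; \frac{|\vec\nu(x)|}{c\,\theta(x)} \;\le\; \frac{1}{c\cdot(1/c)} \;=\; 1
\]
holds at every $x \in \Omega$.

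Plugging $\vec\mu$ into the definition of $W_{\s 0,\theta}(u)$ yields
\[
W_{\s 0,\theta}(u) \;\ge\; \int_\Omega -u\,\text{div}(\theta\vec\mu)\,dx \;=\; \frac{1}{c}\int_\Omega -u\,\text{div}\vec\nu\,dx,
\]
so $\int_\Omega -u\,\text{div}\vec\nu\,dx \le c\,W_{\s 0,\theta}(u)$ for every $\vec\nu \in \mathcal V$. Passing to the supremum over $\vec\nu$ on the left-hand side gives the required bound $J_{\s 0}(u) \le \|1/\theta\|_{L^\infty(\Omega)}\,W_{\s 0,\theta}(u)$.

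The one technical point that will need careful handling is the promotion of the almost-everywhere lower bound $\theta \ge 1/c$ to a pointwise bound, because the admissibility requirement $|\vec\mu| \le 1$ in $\mathcal V_{\s\theta}$ is imposed pointwise rather than almost everywhere. Once a suitable representative of $\theta$ is fixed, the remaining ingredients—the construction of $\vec\mu$, the algebraic identity relating $\text{div}(\theta\vec\mu)$ to $\text{div}\vec\nu$, and the monotonicity of the supremum—reduce to routine bookkeeping.
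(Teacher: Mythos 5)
Your proposal is correct and follows essentially the same route as the paper's own proof: both rescale an arbitrary $\vec\nu\in\mathcal V$ into the admissible field $\vec\nu/(K_{\s\theta}\theta)\in\mathcal V_{\s\theta}$ (your $\vec\mu$) and pass to suprema. Your remark about fixing a representative of $\theta$ so that the essential lower bound holds pointwise is a detail the paper's proof silently assumes, so it is a welcome clarification rather than a deviation.
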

\begin{proof}
Let $u \in \mathcal{M}(\Omega)$ and $K_{\s\theta}\doteq\|\frac1\theta\|_{\s L^\infty(\Omega)}$. Then for all $\vec\nu \in \mathcal{V}$
\begin{align*}
\int_{\Omega}-u\, \text{div}\,\vec\nu\, dx&= K_{\s\theta} \int_{\Omega}-u\, \text{div}\left(
\frac{\theta\vec\nu}{K_{\s\theta}\theta}\right)\, dx \\
&\leq K_{\s\theta}\;\underset {\vec\omega \in \mathcal{V}_{\s \theta}} {\sup}\int_{\Omega}-u\,
\text{div}\,(\theta\,\vec\omega)\,
dx \\
&=K_{\s\theta}\;W{\s 0,\theta}(u),
\end{align*}
where the last inequality follows from the fact that $\frac{\vec\nu}{K_{\s\theta}\,\theta}\in \mathcal{V}_{\s \theta}$
since $K_{\s \theta}\geq 1, |K_{\s \theta} \theta(x)|\geq 1 \, \forall \, x \in\Omega$ and $\vec \nu \in \mathcal V$. Then, taking supremum for $\vec\nu\in\mathcal{V}$ we conclude that  $J_{\s 0}(u)
\le K_{\s\theta}\, W_{\s 0,\theta}(u)$.
\end{proof}
The following lemma will be of fundamental importance for proving several of the upcoming results.
\begin{lem}\label{lem:wls-W}
The functional $W_{\s 0,\theta}$ defined by (\ref{eq:funcional W0theta}) is weakly lower semicontinuous
with respect to the $L^p$ topology, $\forall\, p\in[1,\infty)$.
\end{lem}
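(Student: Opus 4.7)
The plan is to exploit the fact that $W_{\s 0,\theta}$ is by definition a pointwise supremum of integrals that depend linearly on $u$. For each fixed $\vec\nu\in\mathcal V_{\s\theta}$, I would introduce the linear functional
\begin{equation*}
L_{\vec\nu}(u)\doteq\int_\Omega -u\,\mathrm{div}(\theta\vec\nu)\,dx,
\end{equation*}
so that $W_{\s 0,\theta}(u)=\sup_{\vec\nu\in\mathcal V_{\s\theta}}L_{\vec\nu}(u)$. The strategy is then: (i) show that each $L_{\vec\nu}$ is weakly continuous on $L^p(\Omega)$, and (ii) invoke the standard fact that a pointwise supremum of weakly lower semicontinuous (in particular, weakly continuous) functionals is itself weakly lower semicontinuous.

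For step (i), the key observation is already encoded in the definition of $\mathcal V_{\s\theta}$: the vector field $\theta\vec\nu$ belongs to $C^{\s 1}_{\s 0}(\Omega)$, so $\mathrm{div}(\theta\vec\nu)$ is continuous with compact support in $\Omega$, hence bounded. Therefore $\mathrm{div}(\theta\vec\nu)\in L^\infty(\Omega)$, and since $\Omega$ is bounded we have $L^\infty(\Omega)\subset L^{p'}(\Omega)$ for every $p\in[1,\infty)$ (with the convention $p'=\infty$ when $p=1$). Consequently $L_{\vec\nu}$ is a bounded linear functional on $L^p(\Omega)$, hence weakly continuous with respect to the $\sigma(L^p,L^{p'})$ topology. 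The case $p=1$ requires a moment of care since the weak topology on $L^1$ is $\sigma(L^1,L^\infty)$, but the observation that $\mathrm{div}(\theta\vec\nu)\in L^\infty(\Omega)$ covers precisely this case.

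For step (ii), suppose $u_n\overset{w}{\rightarrow}u$ in $L^p(\Omega)$. Then for every fixed $\vec\nu\in\mathcal V_{\s\theta}$,
\begin{equation*}
L_{\vec\nu}(u)=\lim_{n\to\infty}L_{\vec\nu}(u_n)\;\le\;\liminf_{n\to\infty}\sup_{\vec\omega\in\mathcal V_{\s\theta}}L_{\vec\omega}(u_n)=\liminf_{n\to\infty}W_{\s 0,\theta}(u_n).
\end{equation*}
Taking supremum over $\vec\nu\in\mathcal V_{\s\theta}$ on the left-hand side yields $W_{\s 0,\theta}(u)\le\liminf_{n\to\infty}W_{\s 0,\theta}(u_n)$, which is the claim.

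I do not anticipate a genuine obstacle here; the whole argument is driven by the membership $\mathrm{div}(\theta\vec\nu)\in L^\infty(\Omega)$, which is an immediate consequence of the definition of $\mathcal V_{\s\theta}$. The only minor point worth being explicit about in writing is handling $p=1$ separately so that the duality $(L^1)^\ast=L^\infty$ is correctly matched with the available regularity of $\mathrm{div}(\theta\vec\nu)$.
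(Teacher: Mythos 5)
Your argument is correct and is essentially identical to the paper's own proof: both hinge on the observation that $\mathrm{div}(\theta\vec\nu)\in L^\infty(\Omega)\subset L^{q}(\Omega)$ for the conjugate exponent $q$, so that each map $u\mapsto\int_\Omega -u\,\mathrm{div}(\theta\vec\nu)\,dx$ is weakly continuous, and then pass the supremum through the $\liminf$. Your extra remark about handling $p=1$ via the duality $(L^1)^\ast=L^\infty$ is a nice point of care but does not change the substance.
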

\begin{proof}
Let $p\in [1,\infty)$, $\{u_n\}\subset L^p(\Omega)$ and $u\in L^p(\Omega)$ be such that $u_n\overset{w}{\rightarrow} u$. Let $\vec{\nu}_\ast\in \mathcal V_{\s \theta}$ and $q$ the conjugate dual of $p$. Since $\theta\vec \nu_\ast\in C^{\s 1}_{\s 0}(\Omega)$, it follows that $\text{div}(\theta\vec\nu_\ast)$
is uniformly bounded on $\Omega$ and therefore, $\text{div}(\theta\vec\nu_\ast)\in L^{\infty}(\Omega)\subset L^q(\Omega).$ Then, from the weak convergence of $u_n$ it follows that $\underset{n\to \infty}{\lim} \int_{\Omega}-u_n\, \text{div}(\theta \vec\nu_\ast)\, dx = \ds \int_{\Omega}-u\, \text{div}(\theta \vec\nu_\ast)\, dx$.

Hence $\ds\int_{\Omega}-u\, \text{div}(\theta \vec\nu_\ast)\, dx=\underset{n\to \infty}{\lim}  \int_{\Omega}-u_n\, \text{div}(\theta \vec\nu_\ast)\, dx\leq \liminf_{n\rightarrow\infty} \sup_{\vec \nu \in\mathcal V_{\s \theta}}\int_{\Omega}-u_n\, \text{div}(\theta \vec\nu)\, dx =\break \liminf_{n\rightarrow\infty}W_{\s 0,\theta}(u_n)$. Thus $\forall \, \vec\nu_\ast \in \mathcal V_{\s \theta}$
$$\ds\int_{\Omega}-u\, \text{div}(\theta \vec\nu_\ast)\, dx\leq \liminf_{n\rightarrow\infty}W_{\s 0,\theta}(u_n).$$
Taking supremum over all $\vec\nu_\ast \in \mathcal V_{\s \theta}$ it follows that $W_{\s 0,\theta}(u)\leq
\underset{n\rightarrow\infty}{\liminf} \,W_{\s 0,\theta}(u_n).$
\end{proof}
We are now ready to present several results on existence and uniqueness of minimizers of generalized
Tikhonov-Phillips functionals with penalizers involving spatially varying combinations of the $L^2$-norm and of the
functional $W_{\s 0,\theta}$, under different hypotheses on the function $\theta$.
\begin{thm}\label{teo:existencia_unicidad ep1-ep2}
Let $\Omega\subset\mathbb{R}^n$ be a bounded open convex set with Lipschitz boundary, $\X=L^2(\Omega)$, $\Y$ a normed
vector space, $T\in \mathcal L(\X, \Y)$, $v\in\Y$, $\alpha_{\scriptscriptstyle 1},\, \alpha_{\scriptscriptstyle 2}$
positive constants, $\theta \in \widehat{\mathcal{M}}(\Omega)$ and $F_{\s\theta}$ the functional defined by
\begin{equation}\label{eq:funcional J}
F_{\s\theta}(u)\doteq \norm{Tu-v}^2_{\Y}+\alpha_{\scriptscriptstyle 1}\|\sqrt{1-\theta}\,u\|^2_{\scriptscriptstyle
L^2(\Omega)}+\alpha_{\scriptscriptstyle 2}\,W_{\s 0, \theta}(u),\hspace{0.5cm} u\in \mathcal{D}\doteq L^2(\Omega).
\end{equation}
If there exists $\varepsilon_{\s 2}\in \mathbb{R}$, such that
$\theta(x)\leq\varepsilon_{2}<1$ for a.e. $x \in \Omega$, then the functional (\ref{eq:funcional J})
has a unique global minimizer $u^*\in L^2(\Omega)$. If moreover there exists $\varepsilon_{\s 1}\in \mathbb{R}$ such that
$0<\varepsilon_{\s 1}\leq \theta(x)$ for a.e. $x \in \Omega$, then $u^*\in BV(\Omega)$.

\end{thm}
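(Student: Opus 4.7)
The plan is to cast $F_{\s\theta}$ in the form of Theorem \ref{teo:existencia JMAA} by setting
$$W(u)\doteq \alpha_{\s 1}\|\sqrt{1-\theta}\,u\|^2_{L^{\s 2}(\Omega)}+\alpha_{\s 2}\,W_{\s 0,\theta}(u),\qquad u\in\D\doteq L^{\s 2}(\Omega),$$
so that $F_{\s\theta}(u)=\|Tu-v\|^2_{\Y}+W(u)$. Since $\D$ is a vector space (convex), it suffices to verify hypotheses (H1)--(H3) for $W$ and then address strict convexity for uniqueness. Hypothesis (H1) is trivial: the $L^2$-term is nonnegative and $W_{\s 0,\theta}(u)\geq 0$ because the defining supremum includes $\vec\nu\equiv 0$.

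For (H2), I would split $W$ by linearity. Multiplication by the bounded function $\sqrt{1-\theta}$ is a bounded linear operator on $L^{\s 2}(\Omega)$, so $u\mapsto\|\sqrt{1-\theta}\,u\|^2_{L^{\s 2}}$ is a continuous convex functional, hence weakly lower semicontinuous. The second term is weakly lsc on $L^{\s 2}(\Omega)$ by Lemma \ref{lem:wls-W} with $p=2$. Hence $W$ is weakly lsc, which is stronger than required by (H2).

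The one substantive step is (H3), and this is precisely where the hypothesis $\theta(x)\leq\varepsilon_{\s 2}<1$ a.e.\ is used: if $\{u_n\}\subset\D$ is $W$-bounded, then
$$(1-\varepsilon_{\s 2})\|u_n\|^2_{L^{\s 2}(\Omega)}\leq\int_\Omega(1-\theta)\,u_n^{\s 2}\,dx\leq \frac{1}{\alpha_{\s 1}}\,W(u_n),$$
so $\{u_n\}$ is bounded in $L^{\s 2}(\Omega)$, and by reflexivity a subsequence converges weakly to some $u\in L^{\s 2}(\Omega)=\D$. Theorem \ref{teo:existencia JMAA} then supplies a global minimizer $u^\ast$. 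For uniqueness, note that the quadratic $u\mapsto\|\sqrt{1-\theta}\,u\|^2_{L^{\s 2}}$ is \emph{strictly} convex since its weight satisfies $1-\theta\geq 1-\varepsilon_{\s 2}>0$ a.e., while $W_{\s 0,\theta}$ is convex as a supremum of linear functionals of $u$; hence $W$ is strictly convex and Theorem \ref{teo:existencia JMAA} yields uniqueness of $u^\ast$.

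For the final claim, suppose further $\theta(x)\geq\varepsilon_{\s 1}>0$ a.e., so that $\tfrac{1}{\theta}\in L^{\infty}(\Omega)$ with $\|\tfrac{1}{\theta}\|_{L^{\infty}}\leq 1/\varepsilon_{\s 1}$. Theorem \ref{teo:desig-WgJ} then gives
$$J_{\s 0}(u^\ast)\;\leq\;\frac{1}{\varepsilon_{\s 1}}\,W_{\s 0,\theta}(u^\ast)\;\leq\;\frac{F_{\s\theta}(u^\ast)}{\alpha_{\s 2}\,\varepsilon_{\s 1}}\;<\;\infty,$$
and since $u^\ast\in L^{\s 2}(\Omega)\subset L^{\s 1}(\Omega)$ (as $\Omega$ is bounded), one concludes $u^\ast\in BV(\Omega)$. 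The only step that requires care is (H3); once one sees that the hypothesis on $\theta$ was tailored to make $(1-\theta)$ bounded below by a positive constant, the verification is a one-line estimate, and everything else follows from the general machinery of Theorems \ref{teo:existencia JMAA} and \ref{teo:desig-WgJ} together with Lemma \ref{lem:wls-W}.
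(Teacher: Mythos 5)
Your argument is correct, and for the existence and uniqueness part it follows the paper's route exactly: cast $F_{\s\theta}$ into the framework of Theorem \ref{teo:existencia JMAA}, verify \textit{(H1)}--\textit{(H3)} for $W(u)=\alpha_{\s 1}\|\sqrt{1-\theta}\,u\|^2_{L^2(\Omega)}+\alpha_{\s 2}W_{\s 0,\theta}(u)$ (with the same key estimate $(1-\varepsilon_{\s 2})\|u_n\|^2_{L^2}\leq\|\sqrt{1-\theta}\,u_n\|^2_{L^2}\leq W(u_n)/\alpha_{\s 1}$ for \textit{(H3)}), and get uniqueness from strict convexity. Your \textit{(H2)} justification (convex $+$ strongly continuous $\Rightarrow$ weakly lsc for the weighted quadratic term) is marginally cleaner than the paper's appeal to ``$\|\sqrt{1-\theta}\,\cdot\|_{L^2}$ is a norm.'' Where you genuinely diverge is the final claim $u^*\in BV(\Omega)$: the paper goes back inside the minimizing-sequence construction, shows the sequence from \textit{(H3)} is uniformly $BV$-bounded (using the uniform $L^1$ bound and Theorem \ref{teo:desig-WgJ}), and invokes the compact embedding $BV(\Omega)\hookrightarrow L^2(\Omega)$ to conclude the weak limit lies in $BV(\Omega)$. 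You instead argue a posteriori on the minimizer itself: since $F_{\s\theta}(u^*)\leq F_{\s\theta}(0)=\|v\|^2_{\Y}<\infty$, Theorem \ref{teo:desig-WgJ} gives $J_{\s 0}(u^*)\leq\varepsilon_{\s 1}^{-1}W_{\s 0,\theta}(u^*)\leq F_{\s\theta}(u^*)/(\alpha_{\s 2}\varepsilon_{\s 1})<\infty$, and $u^*\in L^2(\Omega)\subset L^1(\Omega)$ on the bounded domain. This is shorter, avoids the compactness/embedding machinery entirely, and is arguably more robust (it does not depend on how the minimizer was produced); the paper's version has the advantage of fitting the template of Theorem 5.1 of \cite{ref:Mazzieri-Spies-Temperini-JMAA-2012} and of yielding the uniform $BV$-bound on the whole minimizing sequence, which is reused in the subsequent theorems. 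Both arguments are sound.
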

\begin{proof}
By virtue of Theorem \ref{teo:existencia JMAA} it is sufficient to show that the functional $$W(u)\doteq \alpha_{\scriptscriptstyle 1}\|\sqrt{1-\theta}\,u\|^2_{\scriptscriptstyle L^2(\Omega)}+\alpha_{\scriptscriptstyle 2}\,W_{\s 0, \theta}(u),\, u\in L^2(\Omega)$$
satisfies hypotheses \textit{(H1)}, \textit{(H2)} and \textit{(H3)}. Clearly \textit{(H1)} holds with $\gamma=0$.

To prove (\textit{H2}) let $\{u_n\}\subset L^2(\Omega)$ such that $u_n \overset{w}{\longrightarrow} u\in L^2(\Omega)$ and $W(u_n)\leq c_1<\infty$. We want to show that $W(u)\leq \underset{n\rightarrow\infty}{\liminf} \,W(u_n).$ Since
$\sqrt{1-\theta}\in L^\infty(\Omega)$ one has $\sqrt{1-\theta}\,u_n\overset{w}{\longrightarrow} \sqrt{1-\theta}\,u$.

The condition $\theta(x)\leq \varepsilon_2<1$ for a.e. $x \in \Omega$, clearly implies that $\|\sqrt{1-\theta}\cdot\|_{\s L^2(\Omega)}$ is a norm.
Then, from the weak lower semicontinuity of  $\|\sqrt{1-\theta}\cdot\|^2_{\s L^2(\Omega)},$ it follows that
\begin{equation}\label{eq:dsi norma 2}
\|\sqrt{1-\theta}\,u\|^2_{L^2(\Omega)}\leq \liminf_{n\rightarrow\infty}\|\sqrt{1-\theta}\,u_n\|^2_{L^2(\Omega)}.
\end{equation}
On the other hand, from the weak lower semicontinuity of $\W$ in $L^2(\Omega)$ (see Lemma \ref{lem:wls-W}) it follows that
\begin{equation}\label{eq:dsi J0}
 \W(u)\leq\liminf_{n\rightarrow\infty}\W(u_n).
\end{equation}
From (\ref{eq:dsi norma 2}) and (\ref{eq:dsi J0}) we then conclude that
\begin{align*}
W(u)&=\alpha_{\scriptscriptstyle 1}\|\sqrt{1-\theta}\,u\|^2_{\scriptscriptstyle L^2(\Omega)}+\alpha_{\scriptscriptstyle 2}\,W_{\s 0, \theta}(u)\\
&\leq \alpha_{\scriptscriptstyle 1}\liminf_{n\rightarrow\infty}\|\sqrt{1-\theta}\,u_n\|^2_{L^2(\Omega)}+\alpha_{\scriptscriptstyle 2} \liminf_{n\rightarrow\infty}\W(u_n)\\
&\leq \liminf_{n\rightarrow\infty}\left(\alpha_{\scriptscriptstyle 1}\|\sqrt{1-\theta}\,u_n\|^2_{L^2(\Omega)}+\alpha_{\scriptscriptstyle 2} \W(u_n)\right)\\
&= \liminf_{n\rightarrow\infty}W(u_n),
\end{align*}
what proves (\textit{H2}).

To prove (\textit{H3}) let $\{u_n\}\subset L^2(\Omega)$ be such that $W(u_n)\leq c_1 <\infty,\,\,\forall \,n$. We want to show that there exist $\{u_{n_j}\}\subset\{u_n\}$ and $u\in L^{2}(\Omega)$ such that $u_{n_j}\overset{w}{\longrightarrow} u$. For this note that
\begin{align}\label{eq:desig-H3}
(1-\varepsilon_{\s 2})\|u_n\|^2_{L^2(\Omega)}\leq \|\sqrt{1-\theta}\,u_n\|^2_{L^2(\Omega)}\leq W(u_n)\leq c_1.
\end{align}
Thus $\|u_n\|_{\s L^2(\Omega)}$ is uniformly bounded and therefore there exist $\{u_{n_j}\}\subset\{u_n\}$ and $u^*\in L^2(\Omega)$ such that $u_{n_j}\overset{w}{\longrightarrow} u^*$. Hence, by Theorem \ref{teo:existencia JMAA}, the functional $F_{\s\theta}(u)$ given by (\ref{eq:funcional J}) has a global minimizer $u^* \in L^2(\Omega).$ The condition $\theta(x)\leq \varepsilon_2<1$ for a.e. $x \in \Omega$ clearly implies the strict convexity of $F_{\s \theta}$ and therefore the uniqueness of such a global minimizer.

To prove the second part of the theorem, assume further that there exists $\varepsilon_1>0$ such that $\theta(x)\geq \varepsilon_1$ for a.e. $x \in \Omega.$ Following the proof of Theorem 5.1 in \cite{ref:Mazzieri-Spies-Temperini-JMAA-2012}, it suffices to show that under this additional hypothesis the weak limit $u$ in \textit{(H3)} above belongs to $BV(\Omega).$ For this note that from  (\ref{eq:desig-H3}) it follows that there exist $c_2<\infty$ such that
\begin{equation}\label{eq:desig2-H3}
\|u_n\|_{\s L^1(\Omega)}\leq c_2 \quad \forall \;n.
\end{equation}
Also, by Theorem \ref{teo:desig-WgJ} $W_{\scriptscriptstyle 0, \theta}(u)\geq \varepsilon_{\s 1}J_{\s 0}(u)\,\,\forall\, u\in \mathcal M(\Omega).$ This, together with    (\ref{eq:desig2-H3}) implies that
\begin{align*}
\|u_n\|_{BV(\Omega)}= \|u_n\|_{L^1(\Omega)}+ J_{\s 0}(u_n)\leq c_2+\frac{W_{\s 0,\theta}(u_n)}{\varepsilon_1}\leq c_3<\infty\,\,\forall \,n,
\end{align*}
where the previous to last inequality follows from the uniform boundedness of $W_{\s 0,\theta}(u_n)$, which, in turn, follows from the uniform boundedness of $W(u_n)$.  Hence the fact that the weak limit in (\textit{H3}) is in $BV(\Omega$) follows from the compact embedding of $BV(\Omega$) in $L^2(\Omega)$. This result is an extension of the Rellich-Kondrachov Theorem which can be found, for instance, in \cite{refb:Adams-1975} and \cite{refb:Attouch-Buttazzo-Michaille-2006}.
\end{proof}

\begin{rem}
Note that if $\theta(x)=0\; \forall\, x\in\Omega$, then $W(u)=\|u\|_{L^2(\Omega)}^2$ and  $F_{\s \theta}$  as defined in (\ref{eq:funcional J}) is the
classical Tikhonov-Phillips functional of order zero. On the other hand, if $\theta(x)=1\; \forall\, x\in\Omega$
then $W(u)=J_{\s 0}(u)$ and $F_{\s \theta}$ has a global minimizer provided that $T \chig_\Omega\neq 0$. If moreover $T$ is injective then
such a global minimizer is unique. All these facts follow immediately from Theorems 3.1 and 4.1 in
\cite{ref:Acar-Vogel-94}.
\end{rem}

\begin{thm}\label{teo:L1-Linf}
Let $\Omega\subset\mathbb{R}^n$ be a bounded open convex set with Lipschitz boundary, $\X=L^2(\Omega)$, $\Y$ a normed
vector space, $T\in \mathcal L(\X, \Y)$, $v\in\Y$, $\alpha_{\scriptscriptstyle 1},\, \alpha_{\scriptscriptstyle 2}$
positive constants and $\theta \in \widehat{\mathcal{M}}(\Omega)$ such that $\frac{1}{1-\theta}\in L^{\s 1}(\Omega)$ and $\frac{1}{\theta}\in L^{\s \infty}(\Omega)$. Then the functional (\ref{eq:funcional J}) has a unique global minimizer $u^*\in BV(\Omega)$.
\end{thm}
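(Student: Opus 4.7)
The strategy is to apply Theorem \ref{teo:existencia JMAA} to the penalizer
\[
W(u)\doteq\alpha_{\scriptscriptstyle 1}\|\sqrt{1-\theta}\,u\|_{\scriptscriptstyle L^2(\Omega)}^2+\alpha_{\scriptscriptstyle 2}\,W_{\s 0, \theta}(u),\qquad u\in\mathcal D\doteq L^2(\Omega),
\]
verifying the three hypotheses (H1)--(H3). Uniqueness will then follow from strict convexity, and the $BV$-regularity of the minimizer will be extracted from the same uniform bound used to establish (H3).

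\textbf{Verification of (H1) and (H2).} (H1) is immediate since $W\ge 0$. For (H2), the fact that $\sqrt{1-\theta}\in L^\infty(\Omega)$ (because $\theta\in\widehat{\mathcal M}(\Omega)$) implies that $u_n\rightharpoonup u$ in $L^2(\Omega)$ entails $\sqrt{1-\theta}\,u_n\rightharpoonup\sqrt{1-\theta}\,u$ in $L^2(\Omega)$, whence weak lower semicontinuity of $\|\cdot\|_{\scriptscriptstyle L^2(\Omega)}^2$ handles the first summand; the second is handled by Lemma \ref{lem:wls-W}.

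\textbf{Main step: (H3).} Assume $W(u_n)\le c_1$. The hypothesis $\tfrac{1}{1-\theta}\in L^1(\Omega)$ gives $\tfrac{1}{\sqrt{1-\theta}}\in L^2(\Omega)$, so Cauchy--Schwarz yields
\[
\|u_n\|_{\scriptscriptstyle L^1(\Omega)}\;\le\;\Big\|\tfrac{1}{\sqrt{1-\theta}}\Big\|_{\scriptscriptstyle L^2(\Omega)}\|\sqrt{1-\theta}\,u_n\|_{\scriptscriptstyle L^2(\Omega)}\;\le\;C_1,
\]
while the hypothesis $\tfrac1\theta\in L^\infty(\Omega)$ combined with Theorem \ref{teo:desig-WgJ} gives $J_{\s 0}(u_n)\le \|\tfrac1\theta\|_{\scriptscriptstyle L^\infty(\Omega)}\,W_{\s 0,\theta}(u_n)\le C_2$, and therefore $\|u_n\|_{\scriptscriptstyle BV(\Omega)}\le C_3$. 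The Rellich--Kondrachov compact embedding of $BV(\Omega)$ into $L^p(\Omega)$ for $1\le p<\tfrac{n}{n-1}$ (and its continuous extension at the endpoint) produces a subsequence $u_{n_j}$ converging strongly in $L^1(\Omega)$ to some $u^*$ and, in the dimensional setting of the paper ($n\le 2$), bounded in $L^2(\Omega)$; extracting a further subsequence gives $u_{n_j}\rightharpoonup u^*$ in $L^2(\Omega)$, establishing (H3).

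\textbf{Uniqueness, $BV$-regularity and main obstacle.} Because $\tfrac{1}{1-\theta}\in L^1(\Omega)$ forces $\theta<1$ a.e., $\|\sqrt{1-\theta}\,\cdot\|_{\scriptscriptstyle L^2(\Omega)}$ is a genuine norm on $L^2(\Omega)$, so $\alpha_1\|\sqrt{1-\theta}\,\cdot\|_{\scriptscriptstyle L^2(\Omega)}^2$ is strictly convex; hence $F_{\s\theta}$ is strictly convex and Theorem \ref{teo:existencia JMAA} delivers uniqueness of the minimizer $u^*$. Finally, $F_{\s\theta}(u^*)<\infty$ gives $W_{\s 0,\theta}(u^*)<\infty$, so Theorem \ref{teo:desig-WgJ} yields $J_{\s 0}(u^*)<\infty$; combined with $u^*\in L^2(\Omega)\subset L^1(\Omega)$, this proves $u^*\in BV(\Omega)$. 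The delicate point is (H3): unlike in Theorem \ref{teo:existencia_unicidad ep1-ep2}, the bound on $\|\sqrt{1-\theta}\,u_n\|_{\scriptscriptstyle L^2}$ does not supply a direct $L^2$ bound on $u_n$, and weak $L^2$-compactness must instead be routed through the uniform $BV$ bound together with a Sobolev-type embedding.
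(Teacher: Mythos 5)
Your proof is correct and follows essentially the same route as the paper's: verify (H1)--(H2) exactly as you do, obtain the uniform $L^1$ bound by Cauchy--Schwarz against $1/\sqrt{1-\theta}\in L^2(\Omega)$ and the uniform $J_{\s 0}$ bound from Theorem \ref{teo:desig-WgJ}, and then pass through the embedding of $BV(\Omega)$ into $L^2(\Omega)$ to recover (H3). Your explicit flagging of the dimensional restriction $n\le 2$ needed for that embedding (which the paper's proof glosses over by invoking a ``compact embedding'' of $BV(\Omega)$ in $L^2(\Omega)$) and your direct derivation of $u^*\in BV(\Omega)$ from $F_{\s\theta}(u^*)<\infty$ are minor refinements, not a different argument.
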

\begin{proof}
Let us consider the functional $$W(u)\doteq \alpha_{\scriptscriptstyle
1}\|\sqrt{1-\theta}\,u\|^2_{\scriptscriptstyle L^2(\Omega)}+\alpha_{\scriptscriptstyle 2}\,W_{\s 0, \theta}(u),\,
u\in L^2(\Omega).$$
By virtue of Theorems \ref{teo:existencia JMAA} and \ref{teo:existencia_unicidad ep1-ep2} and the compact embedding of $BV(\Omega)$ in $L^2(\Omega)$, it suffices to show that $W(\cdot)$ satisfies \textit{(H1)} and \textit{(H2)} and that every $W$-bounded sequence is also $BV$-bounded. Clearly $W(\cdot)$ satisfies \textit{(H1)} with $\gamma=0$. That it satisfies \textit{(H2)} follows immediately from the fact that the condition $\frac{1}{1-\theta}\in L^{\s 1}(\Omega)$ implies that
$\|\sqrt{1-\theta}\cdot\|_{\s L^2(\Omega)}$ is a norm.

Now, let $\{u_n\}\subset L^2(\Omega)$ be a $W$-bounded sequence, i.e. such that $W(u_n)\leq c <\infty,\,\,\forall
\,n$. We will show that $\{u_n\}$ is $BV$-bounded. Since $W(u_n)$ is uniformly bounded, there exist $K<\infty$ such
that $\|\sqrt{1-\theta} \,u_n\|_{\s L^2(\Omega)}\leq K \; \forall \,n.$ From this and the fact that
$\frac{1}{1-\theta}\in L^{\s 1} (\Omega)$ it follows that
\begin{align}\label{eq:desig3-H3}\nonumber
\|u_n\|_{\s L^1(\Omega)}&=\int_\Omega \frac{1}{\sqrt{1-\theta}}\;\sqrt{1-\theta}\,|u_n|\, dx\\ \nonumber
&\leq \left(\int_\Omega \frac{1}{1-\theta}\,dx\right)^{\frac 1 2}\,\left(\int_\Omega
(1-\theta)\,u_n^2\,dx\right)^{\frac 1 2}\\ \nonumber
&=\left\|\frac{1}{1-\theta}\right \|_{\s L^1(\Omega)}^{\frac 1 2}\, \|\sqrt{1-\theta}\,u_n\|_{\s L^2(\Omega)}\\
&\leq K\,\left\|\frac{1}{1-\theta}\right \|_{\s L^1(\Omega)}^{\frac 1 2}<\infty\quad \forall\, n.
\end{align}

On the other hand from Theorem \ref{teo:desig-WgJ} $J_{\s 0}(u)\leq W_{\scriptscriptstyle 0, \theta}(u)\, \left\|\frac 1 {\theta}\right\|_{\s L^\infty(\Omega)}
\, \forall\, u\in L^{\s 2}(\Omega)$. Since $\frac 1 \theta \in L^{\s \infty}(\Omega)$ and $W_{\scriptscriptstyle 0,
\theta}(u_n)$ is uniformly bounded, it then follows that there exists $C<\infty$ such that
\begin{equation}\label{eq:desig4-H3}
J_{\s 0}(u_n)\leq C \; \forall \,n.
\end{equation}
From (\ref{eq:desig3-H3}) and (\ref{eq:desig4-H3}) it follows that
\begin{align*}
\|u_n\|_{BV(\Omega)}= \|u_n\|_{L^1(\Omega)}+ J_{\s 0}(u_n)\leq K\,\left\|\frac{1}{1-\theta}\right
\|_{\s L^1(\Omega)}^{\frac 1 2}+C<\infty\,\,\forall \, n.
\end{align*}
Hence $\{u_n\}$ is $BV$-bounded. The existence of a global minimizer of functional (\ref{eq:funcional J}) belonging to $BV(\Omega)$ then follows.
Finally note that the condition $\frac{1}{1-\theta}\in L^{\s 1}(\Omega)$ implies the strict convexity of
 $F_{\s \theta}$ and therefore the uniqueness of the global minimizer.
\end{proof}
\begin{rem}
Note that the condition $\frac{1}{1-\theta}\in L^{\s 1}(\Omega)$ in Theorem \ref{teo:L1-Linf} is weaker than the condition $\theta(x)\leq\varepsilon_{2}<1$ for a.e. $x \in \Omega$ of Theorem \ref{teo:existencia_unicidad ep1-ep2}. While the latter suffices to guarantee the existence of a global minimizer in $L^{\s 2}(\Omega)$, the former does not. However this weaker condition $\frac{1}{1-\theta}\in L^{\s 1}(\Omega)$ together with the condition $\frac1\theta \in  L^\infty(\Omega)$ are enough for guaranteing not only the existence of a unique global minimizer, but also the fact that such a minimizer belongs to $BV(\Omega)$.
\end{rem}
It is timely to note that in both Theorems \ref{teo:existencia_unicidad ep1-ep2} and \ref{teo:L1-Linf}, the function $\theta$ cannot assume the extreme values 0 and 1 on a set of positive measure. In some cases a pure $BV$ regularization in some regions and a pure $L^2$ regularization in others may be desired, and therefore that restraint on the function $\theta$ will turn out to be inappropriate. In the next three theorems we introduce different conditions which allow the function $\theta$ to take the extreme values on sets of positive measure.
\begin{thm}
Let $\Omega\subset\mathbb{R}^n$ be a bounded open convex set with Lipschitz boundary, $\X=L^2(\Omega)$, $\Y$ a normed
vector space, $T\in \mathcal L(\X, \Y)$, $v\in\Y$, $\alpha_{\scriptscriptstyle 1},\, \alpha_{\scriptscriptstyle 2}$
positive constants,  $\theta \in \widehat{\mathcal{M}}(\Omega)$ and $\Omega_{\s 0}\doteq \{x\in\Omega \text{ such that }\theta(x)=0\}$. If
$\frac{1}{\theta}\in L^{\s \infty}(\Omega_{\s \,0}^{\s \,c})$ and $\frac{1}{1-\theta}\in L^{\s 1}(\Omega_{\s \,0}^{\s \,c})$
then functional (\ref{eq:funcional J}) has a unique global minimizer $u^*\in L^2(\Omega)\cap BV(\Omega_{\s \,0}^{\s \,c}).$
\end{thm}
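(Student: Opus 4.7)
The plan is to apply Theorem \ref{teo:existencia JMAA} to the combined penalizer
\[
W(u) \doteq \alpha_{\s 1}\|\sqrt{1-\theta}\,u\|^{2}_{\s L^2(\Omega)}+\alpha_{\s 2}\,W_{\s 0,\theta}(u),\qquad u\in L^2(\Omega),
\]
and then, following the strategy of the second half of Theorem \ref{teo:existencia_unicidad ep1-ep2} together with Theorem \ref{teo:L1-Linf}, to upgrade the minimizer to $BV(\Omega_{\s 0}^{\s c})$ via Rellich--Kondrachov. The main structural observation is that the hypotheses decouple $\Omega$ into two regimes: on $\Omega_{\s 0}$ one has $\sqrt{1-\theta}\equiv 1$ and the penalizer reduces to a pure $L^2$ term, while on $\Omega_{\s 0}^{\s c}$ the conditions $\frac{1}{\theta}\in L^{\s \infty}(\Omega_{\s 0}^{\s c})$ and $\frac{1}{1-\theta}\in L^{\s 1}(\Omega_{\s 0}^{\s c})$ are precisely those of Theorem \ref{teo:L1-Linf} restricted to that subdomain.

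\textit{(H1)} holds trivially with $\gamma=0$. For \textit{(H2)}, I would first observe that the hypotheses force $1-\theta>0$ a.e.\ on $\Omega$ (on $\Omega_{\s 0}$ this is immediate, and on $\Omega_{\s 0}^{\s c}$ the $L^{\s 1}$-integrability of $1/(1-\theta)$ rules out $\theta=1$ on a set of positive measure), so $\|\sqrt{1-\theta}\cdot\|_{\s L^2(\Omega)}$ is a genuine norm whose square is weakly lower semicontinuous, while $W_{\s 0,\theta}$ is weakly lower semicontinuous by Lemma \ref{lem:wls-W}.

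The bulk of the argument is \textit{(H3)} together with the $BV$-bound. Given a $W$-bounded sequence $\{u_n\}$, the weighted $L^2$ bound gives directly $\|u_n\|_{\s L^2(\Omega_{\s 0})}$ uniformly bounded, so a subsequence converges weakly in $L^2(\Omega_{\s 0})$ to some $u^{\ast}_{\s 1}$. On $\Omega_{\s 0}^{\s c}$ I would reproduce the Cauchy--Schwarz computation of Theorem \ref{teo:L1-Linf} using $\frac{1}{1-\theta}\in L^{\s 1}(\Omega_{\s 0}^{\s c})$ to obtain a uniform $L^{\s 1}(\Omega_{\s 0}^{\s c})$ bound, and then establish a localized version of Theorem \ref{teo:desig-WgJ} by testing $W_{\s 0,\theta}$ against vector fields $\vec\omega\in C^{\s 1}_{\s 0}(\Omega_{\s 0}^{\s c})$ and noting that $\vec\nu\doteq\vec\omega/(\|1/\theta\|_{\s L^{\s \infty}(\Omega_{\s 0}^{\s c})}\,\theta)$, extended by zero to $\Omega_{\s 0}$, still lies in $\mathcal V_{\s\theta}$; this yields the bounded-variation seminorm of $u_n$ on $\Omega_{\s 0}^{\s c}$ uniformly bounded. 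Together these give a uniform $BV(\Omega_{\s 0}^{\s c})$-bound, and Rellich--Kondrachov produces a further subsequence converging strongly in $L^{\s 1}(\Omega_{\s 0}^{\s c})$ (hence a.e.) to some $u^{\ast}_{\s 2}\in BV(\Omega_{\s 0}^{\s c})$.

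The step I expect to be the main obstacle is verifying that the pasted candidate $u^{\ast}\doteq u^{\ast}_{\s 1}\chig_{\Omega_{\s 0}}+u^{\ast}_{\s 2}\chig_{\Omega_{\s 0}^{\s c}}$ actually lies in $L^2(\Omega)$, since this is needed both as the ambient space of Theorem \ref{teo:existencia JMAA} and in the conclusion. On $\Omega_{\s 0}$ this is automatic, but on $\Omega_{\s 0}^{\s c}$ Fatou only yields $\sqrt{1-\theta}\,u^{\ast}\in L^2(\Omega_{\s 0}^{\s c})$ and the $BV$-Sobolev embedding only supplies $u^{\ast}\in L^{n/(n-1)}(\Omega_{\s 0}^{\s c})$. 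I would close the gap by splitting $\Omega_{\s 0}^{\s c}=\{1-\theta\ge\varepsilon\}\cup\{1-\theta<\varepsilon\}$: on the first set the weighted $L^2$ bound directly yields an $L^2$ estimate, while on the complementary set, whose measure tends to zero with $\varepsilon$ thanks to $\frac{1}{1-\theta}\in L^{\s 1}$, one controls $u^{\ast}$ via H\"older combined with the $BV$-Sobolev embedding. Once this $L^2$-integrability is established, existence follows from Theorem \ref{teo:existencia JMAA}, and uniqueness from the strict convexity of $\|\sqrt{1-\theta}\cdot\|^{2}_{\s L^2(\Omega)}$ together with the convexity of $W_{\s 0,\theta}$.
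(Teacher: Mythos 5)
Your proposal follows essentially the same route as the paper's proof: the same decomposition of the penalizer over $\Omega_{\s 0}$ and $\Omega_{\s 0}^{\s c}$, verification of \textit{(H1)}--\textit{(H3)} so that Theorem \ref{teo:existencia JMAA} applies, the Cauchy--Schwarz argument from Theorem \ref{teo:L1-Linf} for the uniform $L^1(\Omega_{\s 0}^{\s c})$ bound, a localized form of Theorem \ref{teo:desig-WgJ} for the $J_{\s 0}$ bound on $\Omega_{\s 0}^{\s c}$, compactness of the $BV$ embedding, and the pasting of the two limits $u^*_{\s 1}$, $u^*_{\s 2}$. The only point where you diverge is that you extract strong $L^1$/a.e.\ convergence on $\Omega_{\s 0}^{\s c}$ and then worry explicitly about the $L^2$-membership of the pasted limit, whereas the paper simply invokes the compact embedding of $BV(\Omega_{\s 0}^{\s c})$ into $L^2(\Omega_{\s 0}^{\s c})$ to get weak $L^2$ convergence directly; be aware that your H\"older patch on the set $\{1-\theta<\varepsilon\}$ only closes the gap when $n\le 2$ (for $n\ge 3$ one has $n/(n-1)<2$ and H\"older runs in the wrong direction), which is the same implicit dimensional restriction under which the paper's appeal to that compact embedding is valid.
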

\begin{proof}
Under the hypotheses of the theorem the functional $W(u)$ can be written as
\begin{equation}\label{eq:W}
W(u)=\alpha_{\s 1}\|u\|^2_{\scriptscriptstyle L^2(\Omega_{\s \,0})}+\alpha_{\s 1}\|\sqrt{1-\theta}\,u\|^2_{\s L^2(\Omega_{\s \,0}^{\s \,c})}+\alpha_{\s 2} \sup_{\vec\nu\in\mathcal V_{\s \theta}}\int_{\Omega_{\s \,0}^{\s \,c}}-u|_{\Omega_{\,0}^{\,c}}\, \text{div}(\theta \vec\nu)\, dx.
\end{equation}

Just like in Theorem \ref{teo:L1-Linf} it follows easily that $W(\cdot)$ satisfies \textit{(H1)} and \textit{(H2)}.

Let now $\{u_n\}\subset L^2(\Omega)$ be a $W$-bounded sequence. From (\ref{eq:W}) we conclude that there exist $u^*_1\in L^2(\Omega_{\s 0})$ and a subsequence $\{u_{n_j}\}\subset\{u_n\}$ such that $u_{n_j}|_{\s {\Omega_{\,0}}} \overset{w-L^2(\Omega_0)}{\longrightarrow} u^*_1$. On the other hand from the uniform boundedness of $\sup_{\vec\nu\in\mathcal V_{\s \theta}}\int_{\Omega_{\s \,0}^{\s \,c}}-u_{n_j}|_{\Omega_{\,0}^{\,c}}\, \text{div}(\theta \vec\nu)\, dx$, by using Theorem \ref{teo:desig-WgJ} with $\Omega$ replaced by $\Omega_{\s \,0}^{\s \,c}$, it follows that there exists a constant $C\leq \infty$ such that $J_{\s 0}(u_{n_j}|_{\Omega_{\,0}^{\,c}})\leq C$ for all $n_j$. Also, from (\ref{eq:W}) and the hypothesis that $\frac{1}{1-\theta}\in L^{\s 1}(\Omega_{\s \,0}^{\s \,c})$, it can be easily proved that the sequence $\{u_n\}$ is uniformly bounded in $ L^{\s 1}(\Omega_{\s \,0}^{\s \,c}).$ Hence $\left\{u_{n_j}|_{\Omega_{\,0}^{\,c}}\right\}$ is uniformly $BV$-bounded. By using the compact embedding of $BV(\Omega_{\s \,0}^{\s \,c})$ in $L^2(\Omega_{\s \,0}^{\s \,c})$ it follows that there exist a subsequence $\{u_{n_{j_k}}\}$ of $\{u_{n_j}\}$ and $u^*_{\s 2}\in BV(\Omega_{\s \,0}^{\s \,c})$ such that $u_{n_{j_k}}\overset{w-L^2(\Omega_{\s \,0}^{\s \,c})}{\longrightarrow}u^*_{\s 2}.$

Let us define now
\begin{equation*}
\hat{u}_{\s 1}(x)\doteq\left\{
  \begin{array}{ll}
   u^*_{\s 1}(x) , & \hbox{if $x\in \Omega_{\s \,0}$,} \\
    0, & \hbox{if $x\in \Omega_{\s \,0}^{\s \,c}$,}
  \end{array}
\right.
\end{equation*}
\begin{equation*}
\hat{u}_{\s 2}(x)\doteq\left\{
  \begin{array}{ll}
   u^*_{\s 2}(x) , & \hbox{if $x\in \Omega_{\s \,0}^{\s \,c}$,} \\
    0, & \hbox{if $x\in \Omega_{\s \,0}$,}
  \end{array}
\right.
\end{equation*}
and $u^*\doteq \hat{u}_{\s 1}+ \hat{u}_{\s 2}.$ Then one has that $u^*\in L^2(\Omega)$, $u^*|_{\s {\Omega_{\,0}^{\,c}}}=u_2^*\in BV(\Omega_{\s \,0}^{\s \,c})$ and $u_{n_{j_k}}\overset{w-L^2(\Omega)}{\longrightarrow}u^*.$

The existence of a global minimizer of functional (\ref{eq:funcional J}) then follows immediately from Theorem $\ref{teo:existencia JMAA}$. Uniqueness is a consequence of the fact that the hypothesis $\frac{1}{1-\theta}\in L^{\s 1}(\Omega_{\s \,0}^{\s \,c})$ implies that $\|\sqrt{1-\theta}\cdot\|_{\s L^2(\Omega_{\s \,0}^{\s \,c})}$ is a norm.
\end{proof}
\begin{thm}\label{teo omega1}
Let $n\leq 2$, $\Omega\subset\mathbb{R}^n$ be a bounded open convex set with Lipschitz boundary, $\X=L^2(\Omega)$, $\Y$ a
normed vector space, $T\in \mathcal L(\X, \Y)$, $v\in\Y$, $\alpha_{\scriptscriptstyle 1},\,
\alpha_{\scriptscriptstyle 2}$ positive constants. Let  $\theta \in \widehat{\mathcal{M}}(\Omega)$ and $\Omega_{\s 1}\doteq
\{x\in\Omega \text{ such that } \theta(x)=1\}$. If $\frac{1}{\theta}\in L^{\s \infty}(\Omega_{\, \s 1}^{\,\s c})$,
$\frac{1}{1-\theta}\in L^{\s 1}(\Omega_{\,\s 1}^{\,\s c})$ and $T\chig_{\s \Omega}\neq 0$, then the
functional (\ref{eq:funcional J}) has a global minimizer $u^*\in L^2(\Omega) \cap BV(\Omega_{\s 1}^{\s c})$.  If moreover $\mathcal{N}(T)$ does not contain functions vanishing on $\Omega_1$, i.e. if $Tu=0$ implies $u|_{\s \Omega_1}\ne 0$, then such a global minimizer is unique.
\end{thm}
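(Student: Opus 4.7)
The plan is to obtain a minimizer of $F_{\s\theta}$ via the direct method, applied not to a $W$-bounded sequence but to a genuine minimizing sequence. Indeed, the penalizer $W(u)\doteq\alpha_{\scriptscriptstyle 1}\|\sqrt{1-\theta}\,u\|^2_{\scriptscriptstyle L^2(\Omega)}+\alpha_{\scriptscriptstyle 2}\W(u)$ offers no control on $u|_{\Omega_{\s 1}}$, since the weight $1-\theta$ vanishes on $\Omega_{\s 1}$; in particular hypothesis \textit{(H3)} of Theorem \ref{teo:existencia JMAA} generally fails for $W$-bounded sequences. The novelty here is to exploit the data term $\|Tu-v\|^2$ through the Acar--Vogel BV-coercivity of Theorem \ref{teo:existencia-JOTA}, which is the reason for the hypotheses $T\chig_{\s \Omega}\neq 0$ and $n\leq 2$ (the latter placing $p=2$ within the admissible range $p\leq n/(n-1)$).

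Let $\{u_n\}\subset L^2(\Omega)$ be a minimizing sequence, so $F_{\s\theta}(u_n)\leq M<\infty$ for every $n$. Since $\theta\equiv 1$ on $\Omega_{\s 1}$ and $\frac{1}{\theta}\in L^\infty(\Omega_{\,\s 1}^{\,\s c})$, one has $\frac{1}{\theta}\in L^\infty(\Omega)$, and Theorem \ref{teo:desig-WgJ} yields
\begin{equation*}
J_{\s 0}(u_n)\;\leq\;\left\|\tfrac{1}{\theta}\right\|_{\s L^\infty(\Omega)}\W(u_n)\;\leq\;\tfrac{1}{\alpha_{\s 2}}\left\|\tfrac{1}{\theta}\right\|_{\s L^\infty(\Omega)}M.
\end{equation*}
Combined with $\|Tu_n-v\|^2\leq M$, this produces a uniform bound on $J(u_n)=\|Tu_n-v\|^2+\alpha_{\s 2}J_{\s 0}(u_n)$, and Theorem \ref{teo:existencia-JOTA} forces $\{u_n\}$ to be uniformly $BV(\Omega)$-bounded. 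For $n\leq 2$, the continuous embedding $BV(\Omega)\hookrightarrow L^{n/(n-1)}(\Omega)\subset L^2(\Omega)$ (valid because $\Omega$ is bounded and $n/(n-1)\geq 2$) then yields $L^2$-boundedness, so some subsequence satisfies $u_{n_j}\overset{w}{\rightarrow} u^*$ in $L^2(\Omega)$.

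Passage to the limit uses weak lower semicontinuity of each of the three summands of $F_{\s\theta}$ under $L^2$-weak convergence: $\|T\cdot-v\|^2$ is weakly lsc since $T\in\mathcal{L}(\X,\Y)$; $\|\sqrt{1-\theta}\,\cdot\|^2_{\s L^2(\Omega)}$ is weakly lsc because $\sqrt{1-\theta}\in L^\infty(\Omega)$ preserves weak $L^2$-convergence and $\|\cdot\|^2_{\s L^2(\Omega)}$ is weakly lsc; and $\W$ is weakly lsc by Lemma \ref{lem:wls-W}. Hence $F_{\s\theta}(u^*)\leq\liminf F_{\s\theta}(u_{n_j})=\inf F_{\s\theta}$, so $u^*$ is a global minimizer. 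In particular $\W(u^*)\leq \liminf \W(u_{n_j})<\infty$, and Theorem \ref{teo:desig-WgJ} then gives $J_{\s 0}(u^*)\leq \|1/\theta\|_{\s L^\infty(\Omega)}\W(u^*)<\infty$; together with $u^*\in L^1(\Omega)$ this yields $u^*\in BV(\Omega)$, and in particular $u^*|_{\Omega_{\,\s 1}^{\,\s c}}\in BV(\Omega_{\s 1}^{\s c})$.

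For uniqueness, a strict-convexity argument applies. If $u_{\s 1},u_{\s 2}$ are two minimizers then equality must hold in the convex combination of the three summands of $F_{\s\theta}$ at $\tfrac12(u_{\s 1}+u_{\s 2})$. The strict convexity of $\|\cdot-v\|^2$ in $\Y$ and of $\|\sqrt{1-\theta}\,\cdot\|^2_{\s L^2(\Omega_{\,\s 1}^{\,\s c})}$ (a norm on the subspace of functions supported in $\Omega_{\s 1}^{\s c}$, since $1-\theta>0$ there) force $T(u_{\s 1}-u_{\s 2})=0$ and $(u_{\s 1}-u_{\s 2})|_{\Omega_{\,\s 1}^{\,\s c}}=0$. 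Thus $w\doteq u_{\s 1}-u_{\s 2}\in \mathcal N(T)$ vanishes outside $\Omega_{\s 1}$, and the null-space hypothesis precludes any such nonzero element, giving $u_{\s 1}=u_{\s 2}$. The main technical obstacle of the argument is the initial bounding step, which departs from the $W$-alone coercivity used in Theorems \ref{teo:existencia_unicidad ep1-ep2} and \ref{teo:L1-Linf} and relies on the borderline case $p=n/(n-1)=2$ of the Acar--Vogel theorem.
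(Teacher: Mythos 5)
Your proof is correct and takes essentially the same route as the paper: both rest on the inequality $\|Tu-v\|^{2}+\alpha_{\s 2}J_{\s 0}(u)\le\bigl\|\tfrac1\theta\bigr\|_{\s L^\infty}F_{\s\theta}(u)$ from Theorem \ref{teo:desig-WgJ} (valid since $\theta\equiv 1$ on $\Omega_{\s 1}$ makes $\tfrac1\theta\in L^\infty(\Omega)$), which transfers the Acar--Vogel $BV$-coercivity of Theorem \ref{teo:existencia-JOTA} to $F_{\s\theta}$, together with Lemma \ref{lem:wls-W} for weak lower semicontinuity; you merely unpack by hand the direct-method argument that the paper delegates to Theorem \ref{teo:existencia AV}. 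The only point worth flagging is that your (correct) uniqueness argument uses the hypothesis in the form ``no nonzero element of $\mathcal N(T)$ vanishes on $\Omega_{\s 1}^{\s c}$'', which is what the theorem's stated condition must be intended to mean.
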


\begin{proof}
We will prove that under the hypotheses of the theorem, the functional $F_{\s\theta}(\cdot)$ defined by (\ref{eq:funcional J}) is weakly lower semicontinuous with respect to the $L^2(\Omega)$ topology and $BV$-coercive.

First note that under the hypotheses of the theorem we can write
\begin{equation}\label{eq:F-theta-omega1}
F_{\s \theta}(u)= \|Tu-v\|^2_\Y+\alpha_{\scriptscriptstyle 1}\|\sqrt{1-\theta}\,u\|^2_{\scriptscriptstyle L^2(\Omega_{\s \, 1}^{\,c})}+\alpha_{\scriptscriptstyle 2}\,W_{\s 0, \theta}(u).
\end{equation}
Since $\frac{1}{1-\theta}\in L^{\s 1}(\Omega_{\,\s 1}^{\,\s c})$, it follows that $\|\sqrt{1-\theta}\,\cdot\|_{L^2(\Omega_1^c)}$ is a norm in $L^2(\Omega_1^c)$ and therefore it is weakly lower semicontinuous.  The weak lower semicontinuity of $F_{\s\theta}(\cdot)$ then follows immediately from this fact, from Lemma \ref{lem:wls-W} and from the weak lower semicontinuity of the norm in $\Y$.

For the $BV$-coercitivity, note that
\begin{align}\label{eq:lalala}\nonumber
\|Tu-v\|^2+\alpha_2 J_{\s 0}(u) &\le \|Tu-v\|^2 +\alpha_2\left\|\frac1\theta\right\|_{\s L^\infty(\Omega_1^c)} W_{\s 0,\theta}(u) \quad(\text{from Theorem (\ref{teo:desig-WgJ})}) \\ \nonumber
&\le \|Tu-v\|^2 +\alpha_2\left\|\frac1\theta\right\|_{\s L^\infty(\Omega_1^c)} W_{\s 0,\theta}(u) +\alpha_1\|\sqrt{1-\theta}\,u\|^2_{\s L^2(\Omega_1^c)}\\
&\le \left\|\frac1\theta\right\|_{\s L^\infty(\Omega_1^c)} F_{\s \theta}(u)\qquad\qquad(\text{since } \left\|\theta^{-1}\right\|_{\s L^\infty(\Omega_1^c)}\ge 1).
\end{align}
Now, since  $T\chig_{\s \Omega}\neq 0$, by Theorem \ref{teo:existencia-JOTA} the functional $J(u)\doteq\|Tu-v\|^2+\alpha_2J_{\s 0}(u)$ is $BV$-coercive on $L^2(\Omega)$. From this and inequality (\ref{eq:lalala}) it follows that $F_{\s \theta}(\cdot)$ is also $BV$-coercive.
The existence of a global minimizer $u^* \in L^{\s 2}(\Omega)$ then follows from Theorem \ref{teo:existencia AV}. Since $F_{\s \theta}(u^*)<\infty$ it follows that both $\|u^*\|_{\s L^1(\Omega_1^c)}$ and $W_{\s 0,\theta}(u^*)$ are finite. The fact that  $u^*$ is of bounded variation on $\Omega_1^c$ then follows from Theorem \ref{teo:desig-WgJ}.
Finally, if $\mathcal{N}(T)$ does not contain functions vanishing on $\Omega_1$ then it follows easily that $F_{\s \theta}(u)$ is strictly convex and therefore such a global minimizer is unique.
\end{proof}

\begin{thm}
Let $\Omega\subset\mathbb{R}^n$, $n\leq 2$ be a bounded open convex set with Lipschitz boundary, $\X=L^{\s 2}(\Omega)$, $\Y$ a
normed vector space, $T\in \mathcal L(\X, \Y)$, $v\in\Y$, $\alpha_{\scriptscriptstyle 1},\,
\alpha_{\scriptscriptstyle 2}$ positive constants. Let  $\theta \in \widehat{\mathcal{M}}(\Omega)$, $\Omega_{\s 0}\doteq \{x\in\Omega \text{ such that } \theta(x)=0\}$ and $\Omega_{\s 1}\doteq
\{x\in\Omega \text{ such that } \theta(x)=1\}$. If $\frac{1}{\theta}\in L^{\s \infty}(\Omega_{\, \s 0}^{\,\s c})$,
$\frac{1}{1-\theta}\in L^{\s \infty}(\Omega_{\,\s 1}^{\,\s c})$ and $\mathcal{N}(T)$ does not contain functions vanishing on $\Omega_1$, i.e. if $Tu=0$ implies $u|_{\s \Omega_1}\ne 0$, then functional (\ref{eq:funcional J}) has a unique global minimizer $u^*\in L^2(\Omega) \cap BV(\Omega_{\s 1}^{\s c}\cap \Omega_{\s 0}^{\s c})$.
\end{thm}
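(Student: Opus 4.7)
The plan is to combine the techniques of Theorems \ref{teo:L1-Linf} and \ref{teo omega1}, now allowing $\theta$ to attain both extreme values $0$ and $1$ on sets of positive measure. Using $\sqrt{1-\theta}=1$ on $\Omega_{\s 0}$ and $\sqrt{1-\theta}=0$ on $\Omega_{\s 1}$, I first decompose
\begin{equation*}
F_{\s\theta}(u) = \|Tu-v\|^2 + \alpha_{\s 1}\|u\|^2_{\s L^2(\Omega_{\s 0})} + \alpha_{\s 1}\|\sqrt{1-\theta}\,u\|^2_{\s L^2(\Omega_{\s 0}^{\s c}\cap\Omega_{\s 1}^{\s c})} + \alpha_{\s 2}\,W_{\s 0,\theta}(u),
\end{equation*}
and verify weak lower semicontinuity of $F_{\s\theta}$ on $L^2(\Omega)$ term by term: the first is convex and continuous; the two $L^2$ terms are w.l.s.c.\ squared norms, with $\frac{1}{1-\theta}\in L^\infty(\Omega_{\s 1}^{\s c})$ ensuring $\|\sqrt{1-\theta}\,\cdot\|$ is an equivalent norm on $L^2(\Omega_{\s 0}^{\s c}\cap\Omega_{\s 1}^{\s c})$; and the last is w.l.s.c.\ by Lemma \ref{lem:wls-W}.

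For existence, I take a minimizing sequence $\{u_n\}$ and derive boundedness in stages. Directly from $F_{\s\theta}(u_n)\le c$ one gets $\{u_n|_{\Omega_{\s 0}}\}$ and $\{u_n|_{\Omega_{\s 0}^{\s c}\cap\Omega_{\s 1}^{\s c}}\}$ uniformly $L^2$-bounded, so $\{u_n|_{\Omega_{\s 1}^{\s c}}\}$ is $L^2$-bounded. Applying Theorem \ref{teo:desig-WgJ} on $\Omega_{\s 0}^{\s c}$ (via $\frac{1}{\theta}\in L^\infty(\Omega_{\s 0}^{\s c})$) to the uniform $W_{\s 0,\theta}$-bound, together with the $L^1$-bound on $\Omega_{\s 0}^{\s c}\cap\Omega_{\s 1}^{\s c}$, yields a uniform $BV$-bound for $\{u_n|_{\Omega_{\s 0}^{\s c}\cap\Omega_{\s 1}^{\s c}}\}$. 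Control of $\{u_n|_{\Omega_{\s 1}}\}$ will come from the $Y$-boundedness of $\{Tu_n\}$ combined with the null space hypothesis on $\mathcal N(T)$. After extracting a weakly convergent subsequence $u_{n_k}\rightharpoonup u^*$ in $L^2(\Omega)$ (using the compact embedding $BV\hookrightarrow L^2$ for $n\le 2$ on $\Omega_{\s 0}^{\s c}\cap\Omega_{\s 1}^{\s c}$), weak lower semicontinuity gives $F_{\s\theta}(u^*)=\inf F_{\s\theta}$; the regularity $u^*|_{\Omega_{\s 0}^{\s c}\cap\Omega_{\s 1}^{\s c}}\in BV$ then follows post hoc from $F_{\s\theta}(u^*)<\infty$ and Theorem \ref{teo:desig-WgJ}.

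For uniqueness I appeal to strict convexity: if $u_1\ne u_2$ both minimize $F_{\s\theta}$, equality in the convexity inequality forces $Tu_1=Tu_2$ and $u_1=u_2$ a.e.\ on $\Omega_{\s 1}^{\s c}$, so $w\doteq u_1-u_2\in\mathcal N(T)$ is supported on $\Omega_{\s 1}$, which the null space hypothesis rules out. I expect the main obstacle to be the $\Omega_{\s 1}$ coercivity in the existence step: the null space hypothesis gives injectivity of the extension-by-zero operator $\tilde T:L^2(\Omega_{\s 1})\to Y$ but not, a priori, a coercive bound $\|w\|_{\s L^2(\Omega_{\s 1})}\le C\|\tilde T w\|_{\s Y}$; transferring boundedness of $\{T(u_n\chi_{\Omega_{\s 1}})\}$ in $Y$ to $L^2$-boundedness of $\{u_n|_{\Omega_{\s 1}}\}$ will require care, most likely by replacing $\{u_n\}$ with a minimizing sequence obtained by projecting away the kernel components on $\Omega_{\s 1}$ without increasing $F_{\s\theta}$.
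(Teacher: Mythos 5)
Your setup --- the decomposition of $F_{\s\theta}$, the term-by-term weak lower semicontinuity via Lemma \ref{lem:wls-W}, the uniform $L^2$ bounds on $\Omega_{\s 0}$ and on $\Omega_{\s 0}^{\s c}\cap\Omega_{\s 1}^{\s c}$, the post hoc $BV$ regularity of $u^*$, and the strict-convexity argument for uniqueness --- all match the paper. But the one step you defer as ``requiring care,'' namely controlling $\{u_n|_{\s\Omega_1}\}$, is the crux of the whole proof, and the remedy you sketch does not close it. Since $\mathcal N(T)$ contains no function supported in $\Omega_{\s 1}$, the extension-by-zero operator $\tilde T:L^2(\Omega_{\s 1})\to\Y$ is injective, so there are no ``kernel components on $\Omega_{\s 1}$'' to project away; and if instead you project $u_n$ onto $\mathcal N(T)^\perp$ in $L^2(\Omega)$, you both alter the penalty terms (so $F_{\s\theta}$ may increase) and gain nothing, because $T$ restricted to $\mathcal N(T)^\perp$ still has non-closed range and hence an unbounded inverse. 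No projection trick converts injectivity into a quantitative bound $\|w\|_{\s L^2(\Omega_1)}\le C\|\tilde Tw\|_{\s\Y}$. As written, the existence argument is therefore incomplete.

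The missing idea is the Acar--Vogel coercivity mechanism, for which you already hold every ingredient. Theorem \ref{teo:desig-WgJ} applied on $\Omega_{\s 0}^{\s c}$ bounds $J_{\s 0}$ not just on $\Omega_{\s 0}^{\s c}\cap\Omega_{\s 1}^{\s c}$ but on all of $\Omega_{\s 0}^{\s c}\supset\Omega_{\s 1}$, so $J_{\s 0}^{\s\Omega_1}(u_n)$ is uniformly bounded along your minimizing sequence. By the Poincar\'e-type inequality behind Theorem \ref{teo:existencia-JOTA} (valid since $n\le 2$), $\|u_n-\bar u_n\chig_{\s\Omega_1}\|_{\s L^2(\Omega_1)}\le C\,J_{\s 0}^{\s\Omega_1}(u_n)$ with $\bar u_n$ the mean of $u_n$ over $\Omega_{\s 1}$; hence, up to a bounded error, $u_n|_{\s\Omega_1}$ lives in the one-dimensional span of $\chig_{\s\Omega_1}$, and on that line the null-space hypothesis (which forces $T\chig_{\s\Omega_1}\ne 0$) \emph{is} a coercive bound: $\|Tu_n-v\|\ge|\bar u_n|\,\|T\chig_{\s\Omega_1}\|-\|T(u_n-\bar u_n\chig_{\s\Omega_1})\|-\|v\|\to\infty$ if $|\bar u_n|\to\infty$, contradicting $F_{\s\theta}(u_n)\le c$. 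This is exactly how the paper closes the argument: it proves $F_{\s\theta}$ is $L^2(\Omega)$-coercive by combining the compact embedding $BV(\Omega_{\s 1})\hookrightarrow L^2(\Omega_{\s 1})$ with the $BV$-coercivity of $\|Tu-v\|^2+\alpha_2 J_{\s 0}^{\s\Omega_1}(u)$ from Theorem \ref{teo:existencia-JOTA} and the pointwise bound $\|Tu-v\|^2+\alpha_2 J_{\s 0}^{\s\Omega_1}(u)\le F_{\s\theta}(u)$. Insert this step and the rest of your proposal goes through essentially as the paper's proof does.
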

\begin{proof}
For the existence of a global minimizer
it is sufficient to prove that the functional $F_{\theta}$ defined by  (\ref{eq:F-theta-omega1}) is weakly lower semicontinuos and $L^{\s 2}(\Omega)$-coercive. For this, note that

\begin{equation}\label{eq:F-theta-omega01}
F_{\s \theta}(u)= \|Tu-v\|^2_\Y+\alpha_{\scriptscriptstyle 1}\|\sqrt{1-\theta}\,u\|^2_{\scriptscriptstyle L^2(\Omega_{\s \, 1}^{\,c})}+\alpha_{\scriptscriptstyle 2}\,\sup_{\vec\nu\in\mathcal V_{\s \theta}}\int_{\Omega_{\s \,0}^{\s \,c}}-u\, \text{div}(\theta \vec\nu)\, dx.
\end{equation}

The weak lower semicontinuity of $F_{\s\theta}(\cdot)$ follows from Lemma \ref{lem:wls-W} and the weak lower semicontinuity of the norms in $\Y$ and $\|\sqrt{1-\theta}\,\cdot\|_{L^2(\Omega_1^c)}$.

We shall now prove that $F_{\s\theta}(\cdot)$ is $L^{\s 2}(\Omega)$-coercive. For that, assume $\{u_n\}$ is a sequence in $L^{\s 2}(\Omega)$ such that
$\|u_n\|_{\s L^{\s 2}(\Omega)}\rightarrow \infty$. Then either $\|u_n\|_{\s L^{\s 2}(\Omega_1^c)}\rightarrow \infty$ or $\|u_n\|_{\s L^{\s 2}(\Omega_1)}\rightarrow \infty$. If $\|u_n\|_{\s L^{\s 2}(\Omega_1^c)}\rightarrow \infty$, then the hypothesis $\frac{1}{1-\theta}\in L^{\s \infty}(\Omega_{\,\s 1}^{\,\s c})$ implies that $\|\sqrt{1-\theta}\,u\|^2_{\scriptscriptstyle L^2(\Omega_{\s \, 1}^{\,c})}\,\rightarrow \infty$ and therefore $F_{\s \theta}(u_n)\rightarrow \infty$. Suppose now that $\|u_n\|_{\s L^{\s 2}(\Omega_1)}\rightarrow \infty$ and without loss of generality assume that $\|u_n\|_{\s L^{\s 2}(\Omega_{\s 1}^{\s c})} \le C<\infty$. Then due to the compact embedding $BV(\Omega_{\s 1})\hookrightarrow L^{\s 2}(\Omega_{\s 1})$
it follows that $\|u_n\|_{\s BV(\Omega_1)} \rightarrow \infty$. Since $\mathcal{N}(T)$ does not contain functions vanishing on $\Omega_1$, it follows that $T\chig_{\s \Omega_1}\neq 0$. Then, by Theorem 1.2, the functional $\|Tu_n-v\|^2_\Y + \alpha_2 J_{\s 0}^{\s \Omega_1}(u_n)$ is $BV$-coercive; i.e:
\begin{equation}\label{eq:caca1}
\|Tu_n-v\|^2_\Y + \alpha_2 J_{\s 0}^{\s \Omega_1}(u_n)\rightarrow\infty.
\end{equation}
Now clearly
\begin{equation} \label{eq:caca2}
\begin{aligned}
\|Tu_n-v\|^2_\Y + \alpha_2 J_{\s 0}^{\s \Omega_1}(u_n) &\le \|Tu_n-v\|^2_\Y +\alpha_{\scriptscriptstyle 2}\,\sup_{\vec\nu\in\mathcal V_{\s \theta}}\int_{\Omega_{\s \,0}^{\s \,c}}-u_n\, \text{div}(\theta \vec\nu)\, dx \\
&\le F_{\s \theta}(u_n).
\end{aligned}
\end{equation}
From (\ref{eq:caca1}) and (\ref{eq:caca2}) it follows that $F_{\s \theta}(u_n)\rightarrow \infty$. Hence $F_{\s \theta}$ is $L^{\s 2}(\Omega)$-coercive. The existence of a global minimizer then follows.
%
Finally, the hypothesis that $\mathcal{N}(T)$ does not contain functions vanishing on $\Omega_1$ also implies that $F_{\s \theta}(u)$ is strictly convex and therefore such a global minimizer is unique.
\end{proof}

\section{Signal restoration with $L^2$-$BV$ regularization}
\label{sec:3}

The purpose of this section is to show some applications of the regularization method developed in the previous section consisting in the simultaneous use of penalizers of $L^2$ and of bounded-variation (BV) type to signal restoration problems.

\medskip

A basic mathematical model for signal blurring is given by convolution, as a
Fredholm integral equation of first kind:
\begin{equation}\label{eq:I}
v(t)=\int_0^1 k(t,s) u(t) ds,
\end{equation}
where  $k(t,s)$ is the blurring kernel or point spread function, $u$ is the exact (original) signal and $v$ is the blurred signal. For the examples that follow we took a Gaussian blurring kernel, i.e. $k(t,s)=\frac{1}{\sqrt{2\pi}\sigma_b}\,\textrm{exp}\left(-\frac{(t-s)^2}{2\sigma_b^2}\right)$, with  $\sigma_b> 0$. Equation (\ref{eq:I}) was discretized in the usual way (using collocation and quadrature), resulting in a
discrete model of the form
\begin{equation}\label{eq:II}
    Af=g,
\end{equation}
where $A$ is a $(n+1) \times (n+1)$ matrix, $\,f,g \in \mathbb{R}^{n+1}$ ($f_j=u(t_j), \, g_j=v(t_j),\, t_j=\frac j n, \, 0\leq j\leq n$). We took $n=130$ and $\sigma_b=0.05$. The data $g$ was contaminated with a 1\%
zero-mean Gaussian additive noise (i.e. standard deviation equal to 1\% of the range of $g$).

\medskip

\textbf{Example 3.1.} For this example, the original signal (unknown in real life problems) and the blurred noisy signal which constitutes the data
of the inverse problem for this example are shown in Figure 1.

\begin{figure}[H]
\begin{center}
\includegraphics[scale=.5]{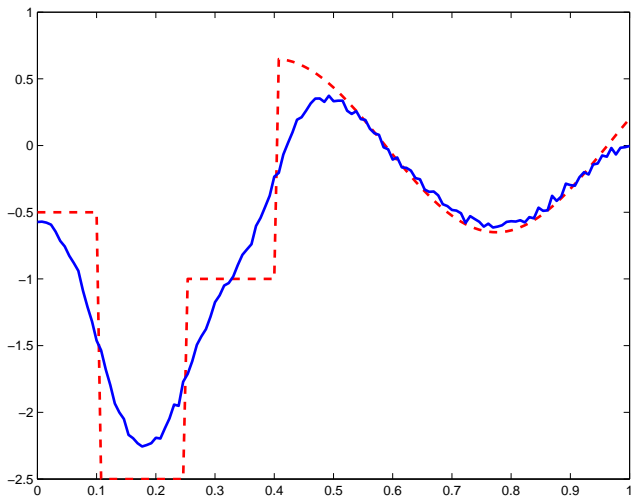}
\caption{Original signal ({\color{red}{- -}}) and blurred noisy signal ({\color{blue}{---}}).}
\label{fig:1}
\end{center}
\end{figure}

Figure 2 shows the regularized solutions obtained with the classical Tikhonov-Phillips method of order zero (left) and with penalizer associated to the bounded variation seminorm $J_{\s 0}$ (right). As expected, the regularized solution obtained with the $J_{\s 0}$ penalizer is significantly better than the one obtained with the classical Tikhonov-Phillips method near jumps and in regions where the exact solution is piecewise constant.
The opposite happens where the exact solution is smooth.

\begin{figure}[ht]
\begin{center}
\includegraphics[scale=.48]{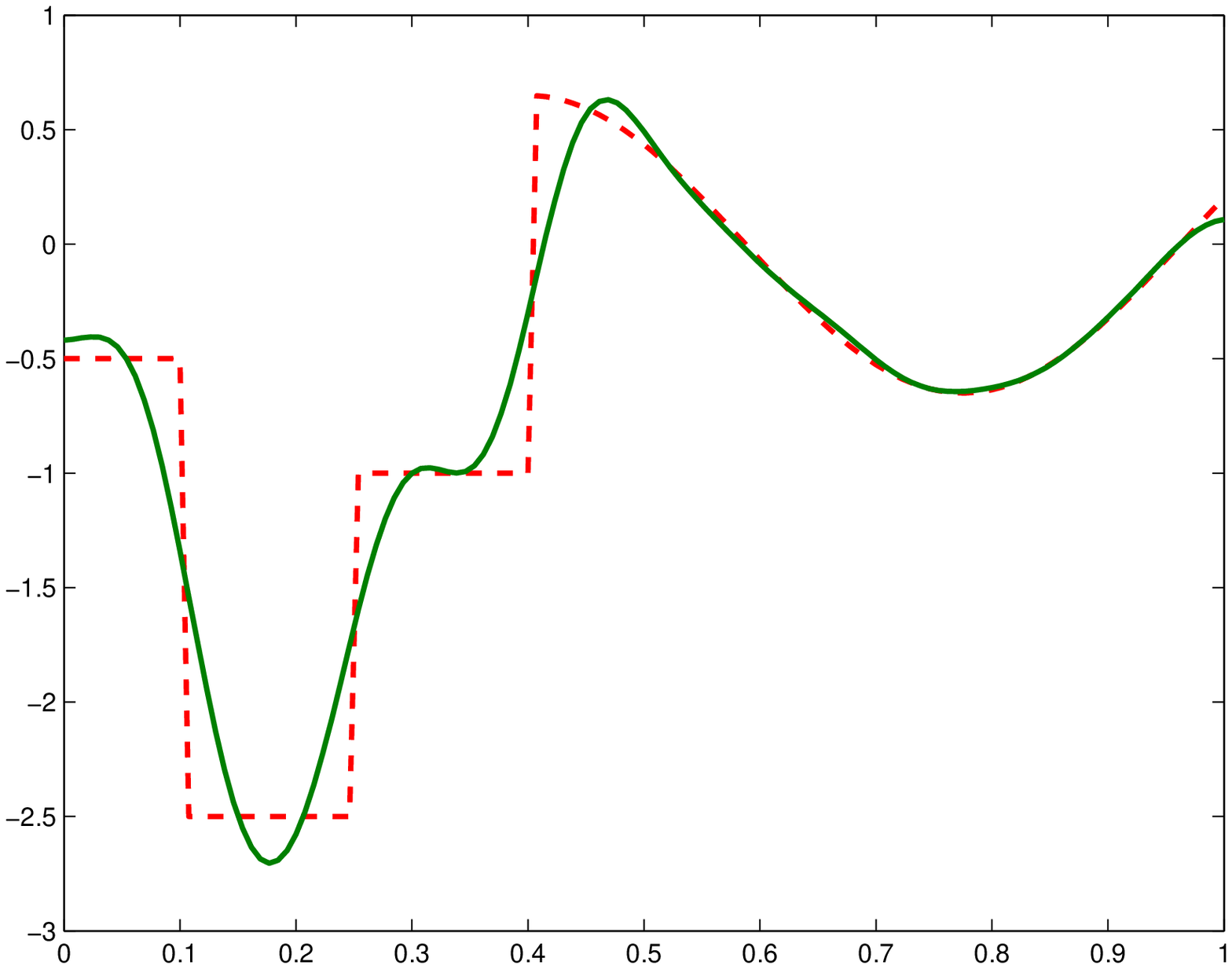}\quad
\includegraphics[scale=.48]{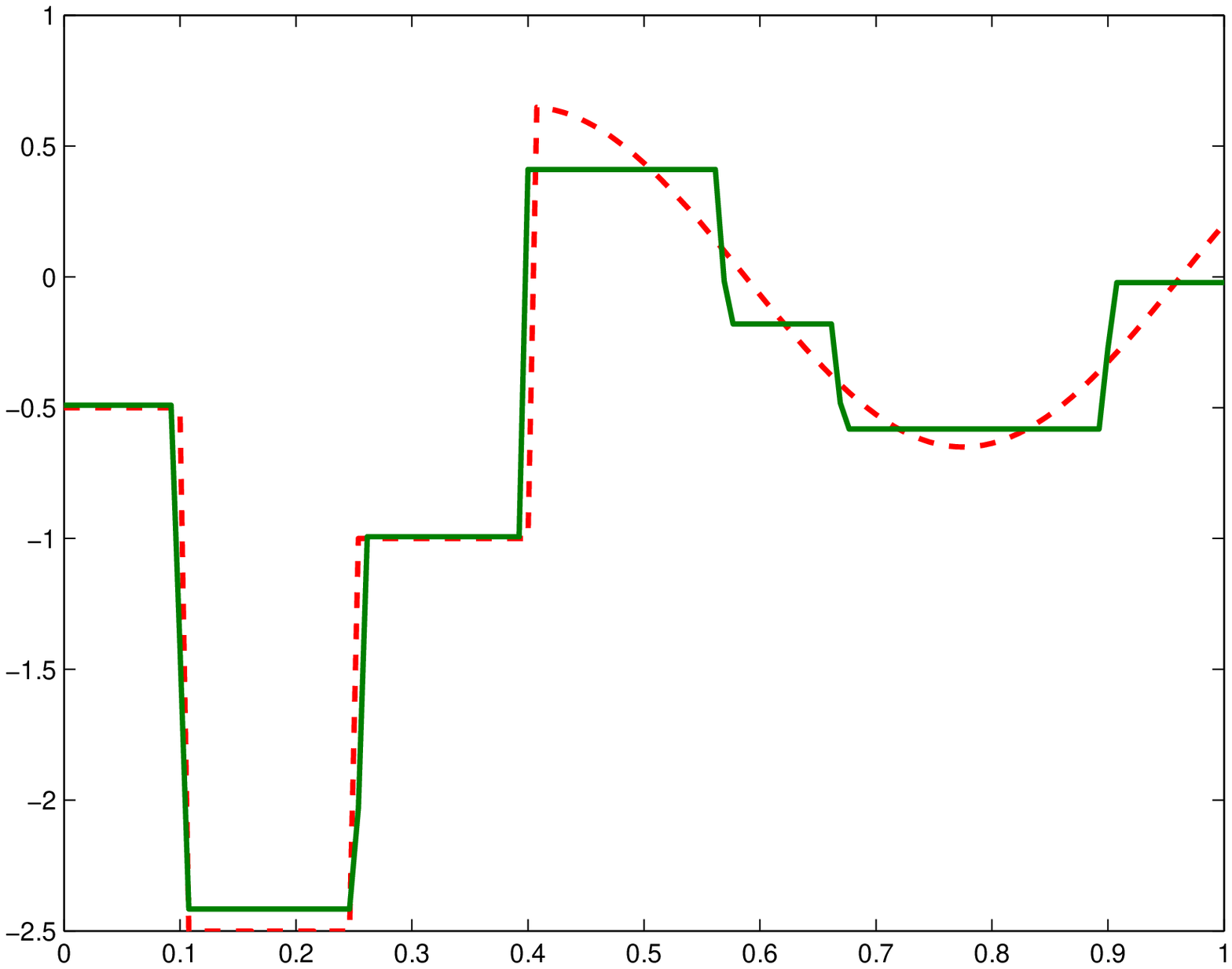}\\
\caption{Original signal ({\color{red}{- -}}) and regularized solutions ({\textcolor[rgb]{0.00,0.50,0.00}{---}}) obtained with Tikhonov-Phillips (left) and bounded variation seminorm (right).}
\label{fig:2}
\end{center}
\end{figure}
Figure 3 shows the regularized solution obtained with the combined $L^2-$BV method (see (\ref{eq:funcional J})). In this case the weight function $\theta(t)$ was chosen to be $\theta(t)\doteq 1$ for $t\in (0, 0.4]$ and $\theta(t)\doteq 0$ for $t\in (0.4, 1)$. Although this
choice of $\theta(t)$ is clearly based upon ``\textit{a-priori}'' information about the regularity of exact
solution, other reasonable choices of $\theta$ can be made by using only data-based information. Choosing a ``good" weighting function $\theta$ is a very important issue but we shall not discuss this matter in this article. For instance, one way of constructing a reasonable function $\theta$ is by computing the normalized (in $[0, 1]$) convolution of a Gaussian function of zero mean and standard deviation $\sigma_b$ and the modulus of the gradient of the regularized solution obtained with a pure zero-order Tikhonov-Phillips method (see Figure 4). For this weight function $\theta$, the corresponding regularized solution obtained with the combined $L^2-$BV method is shown in Figure 5. In all cases reflexive boundary conditions were used (\cite{refb:Hansen2010}) and the regularization parameters were calculated using Morozov's discrepancy principle with $\tau=1.1$ (\cite{refb:Engl-Hanke-96}).
\begin{figure}[H]
\begin{center}
\includegraphics[scale=.461]{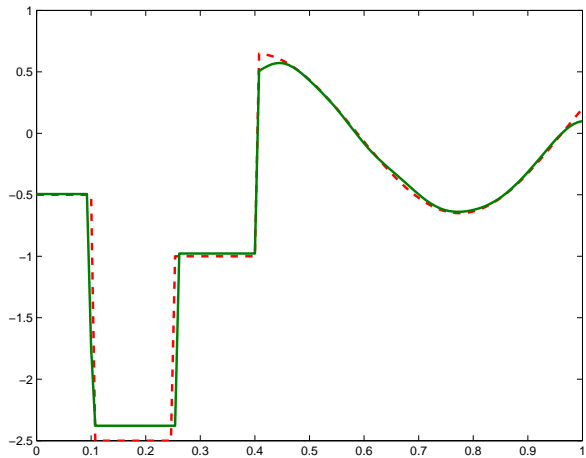}
\caption{Original signal ({\color{red}{- -}}) and regularized solution ({\textcolor[rgb]{0.00,0.50,0.00}{---}}) obtained with the combined $L^2-BV$ method and binary weight function $\theta$.}
\label{fig:3}
\end{center}
\end{figure}
\begin{figure}[H]
\begin{center}
\includegraphics[scale=.5]{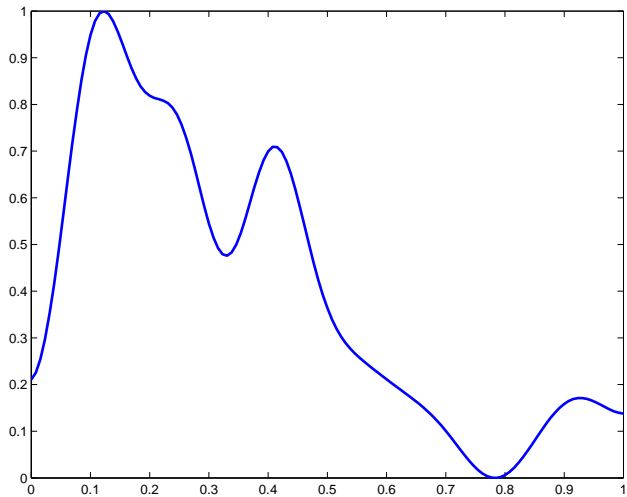}
\caption{Weight function $\theta$ computed by normalizing the convolution of a Gaussian kernel and the modulus of the gradient of the regularized solution with a pure Tikhonov-Phillips method.}
\label{fig:4}
\end{center}
\end{figure}

\begin{figure}[H]
\begin{center}
\includegraphics[scale=.5]{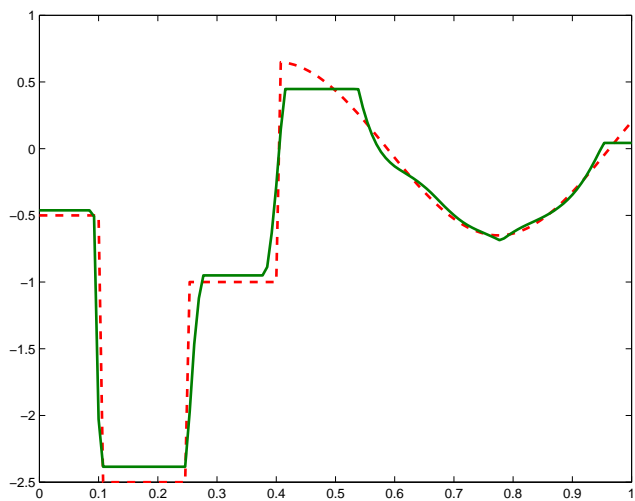}
\caption{Original signal ({\color{red}{- -}}) and regularized solution ({\textcolor[rgb]{0.00,0.50,0.00}{---}}) obtained with the combined $L^2-BV$ method and the data-based weight function $\theta$ showed in Fig. \ref{fig:4}.}
\label{fig:5}
\end{center}
\end{figure}

 As it can be seen, the improvement of the result obtained with the combined $L^2-BV$ method and ``\textit{ad-hoc}'' binary function $\theta$ with respect to the pure simple methods, zero-order Tikhonov-Phillips and pure $BV$, is notorious. As previously mentioned however, in this case the construction of the function $\theta$ is based on ``\textit{a-priori}'' information about the exact solution, which most likely will not be available in concrete real life problems. Nevertheless, the regularized solution obtained with the data-based weight function $\theta$ shown in Figure \ref{fig:4} is also significantly better than those obtained with any of the single-based penalizers. This fact is clearly and objectively reflected by the Improved Signal-to-Noise Ratio (ISNR) defined as
$$ISNR=10 \log_{10}\left(\frac{\norm{f-g}^2}{\norm{f-f_\alpha}^2}
\right),$$ where $f_\alpha$ is the restored signal obtained with regularization parameter $\alpha$. For all the
previously shown restorations, the ISNR was computed in order to have a parameter for objectively measuring and comparing the
quality of the regularized solutions (see Table \ref{tab:1}).
\begin{table}[H]
\caption{ISNR's for Example 3.1.}
\label{tab:1}       
\smallskip
\hskip 2cm
\begin{tabular}{|p{11cm}|p{1.5cm}|}
\hline
\textbf{Regularization Method} & \textbf{ISNR} \\
\hline
Tikhonov-Phillips of order zero & 2.5197\\
\hline
Bounded variation seminorm & 4.2063\\
\hline
Mixed $L^2-$BV method with binary $\theta$ & 5.7086\\
\hline
Mixed $L^2-$BV method with zero-order Tikhonov-based $\theta$  & 4.4029\\
\hline
\end{tabular}
\end{table}

\medskip

\textbf{Example 3.2.} For this example we considered a signal which is smooth in two disjoint intervals and it is piecewise constant in their complement, having three jumps. The signal was blurred and noise was added just as in the previous example. The original and blurred-noisy signal are depicted in Figure 6.
\begin{figure}[H]
\begin{center}
\includegraphics[scale=.5]{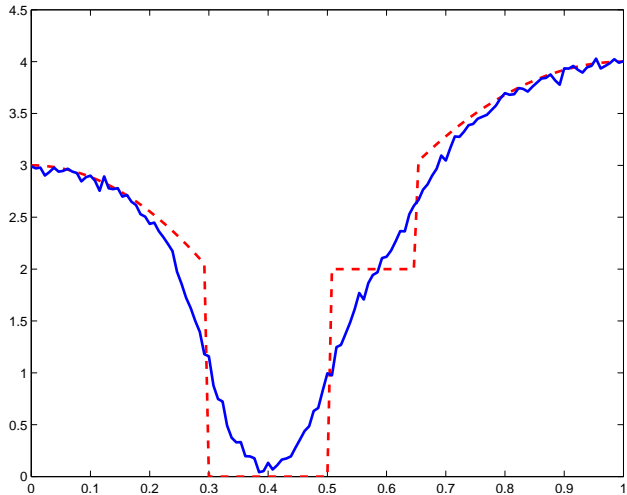}
\caption{Original ({\color{red}{- -}}) and blurred-noisy ({\color{blue}{---}}) signals for Example 3.2.}
\label{fig:6}
\end{center}
\end{figure}
Figure 7 shows the restorations obtained with the classical zero-order Tikhonov-Phillips method (left) and $BV$ with penalizer $J_{\s 0}$ (right).
\begin{figure}[H]
\begin{center}
\includegraphics[scale=.48]{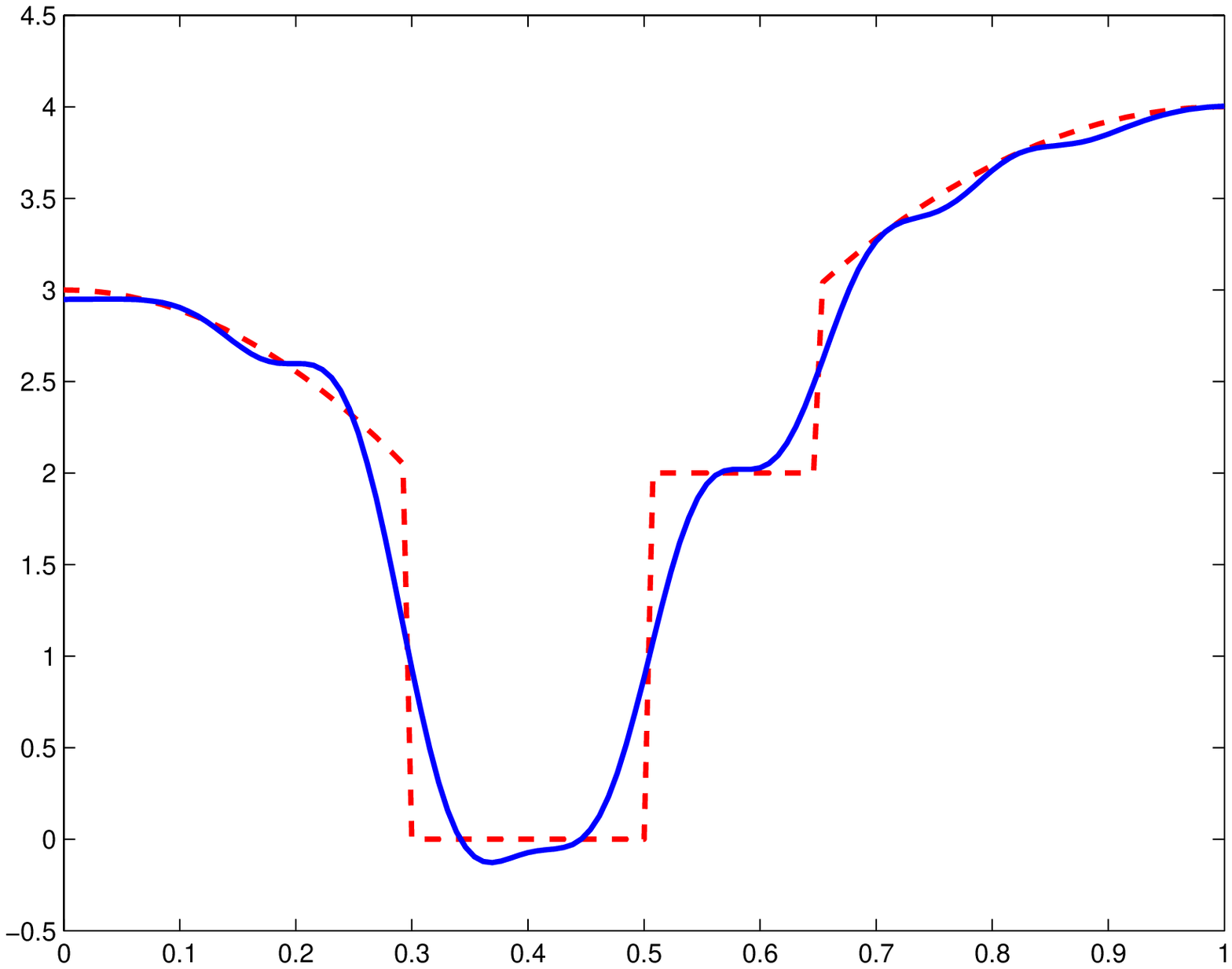}\quad
\includegraphics[scale=.48]{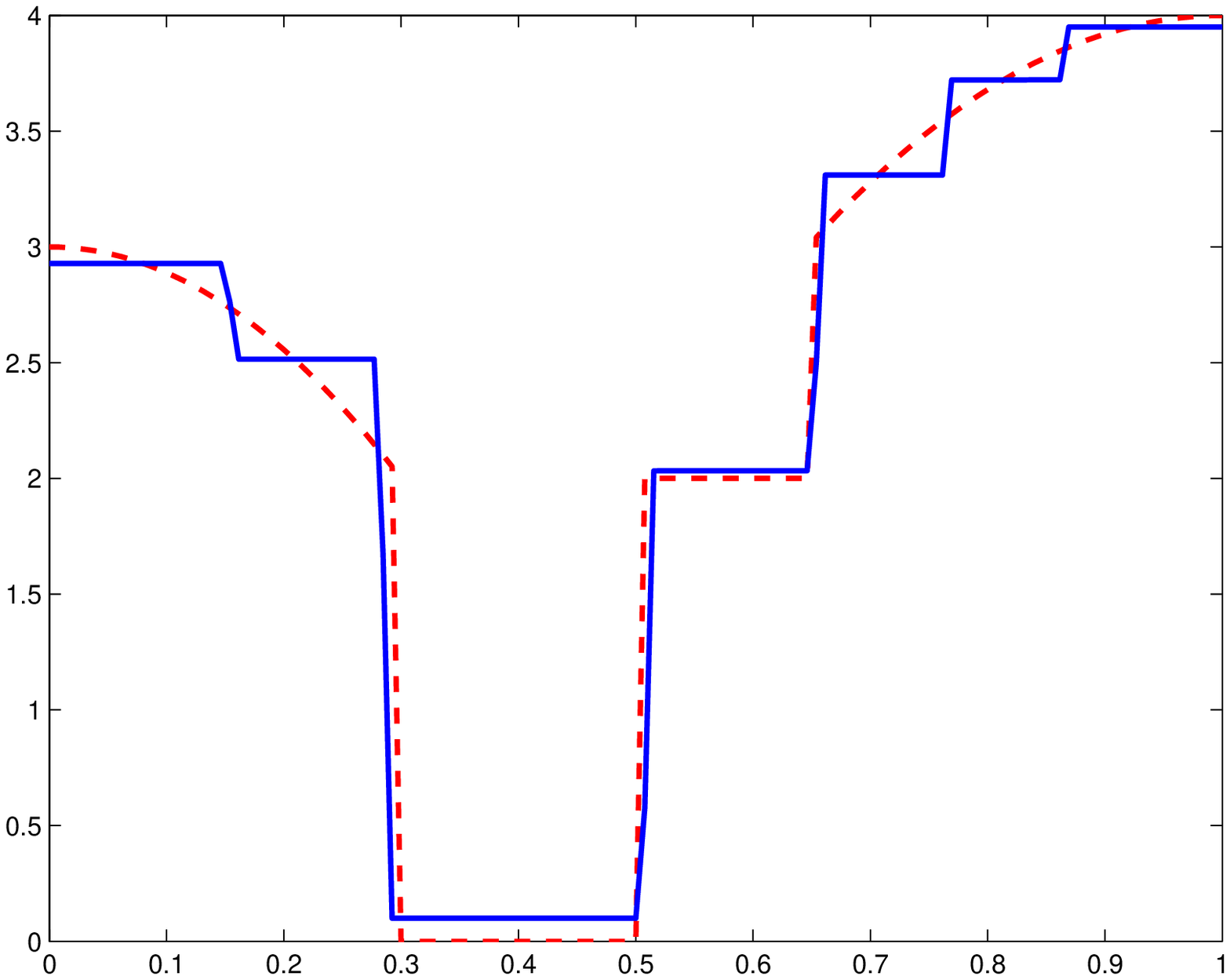}\\
\caption{Original signal ({\color{red}{- -}}) and regularized solutions ({\textcolor[rgb]{0.00,0.50,0.00}{---}}) obtained with Tikhonov-Phillips (top) and bounded variation seminorm (bottom).}
\label{fig:7}
\end{center}
\end{figure}
An ad-hoc binary weight function theta for this example was defined on the interval $[0,1]$ as $\theta(t)=\chi_{\s [0.3, 0.65]}(t)$. The regularized solution obtained with this weight function and the combined $L^2-BV$ method is shown in Figure 8. Once again, the improvement with respect to any of the classical pure methods is clearly notorious.
\begin{figure}[H]
\begin{center}
\includegraphics[scale=.5]{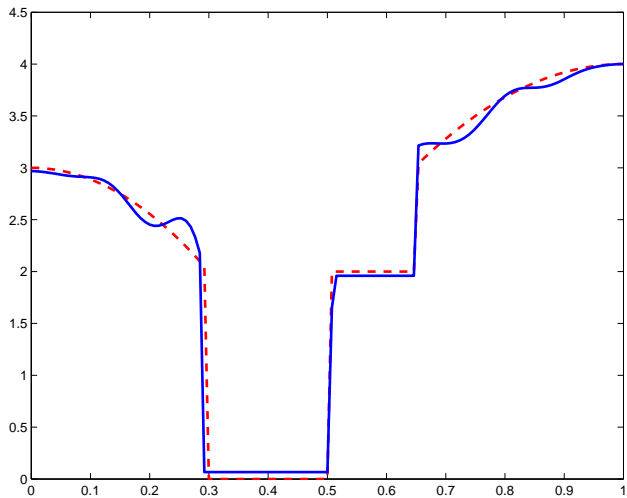}
\caption{Original signal ({\color{red}{- -}}) and regularized solution ({\textcolor[rgb]{0.00,0.50,0.00}{---}}) obtained with the combined $L^2-BV$ method and binary function $\theta$.}
\label{fig:8}
\end{center}
\end{figure}
Here also we constructed a data based weight function $\theta$ as in Example 3.1, by convolving a Gaussian kernel with the modulus of the gradient of a a Tikhonov regularized solution and normalizing the result. This weight function $\theta$ is now depicted in Figure 9, while the corresponding restored signal is shown in Figure 10.
\begin{figure}[H]
\begin{center}
\includegraphics[scale=.5]{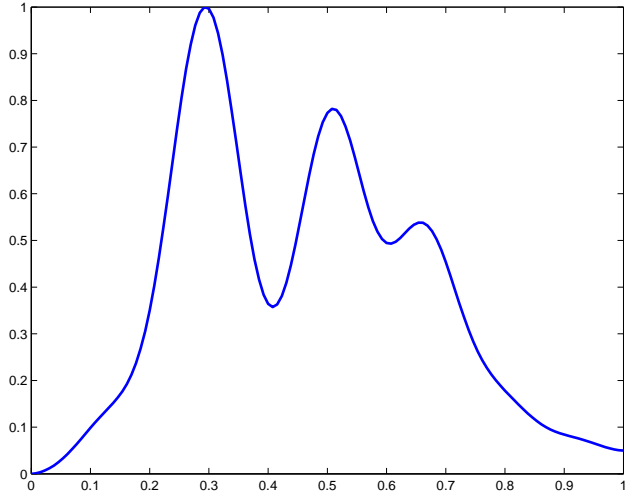}
\caption{Tikhonov-based weight function $\theta$ for Example 3.2.}
\label{fig:9}
\end{center}
\end{figure}
\begin{figure}[H]
\begin{center}
\includegraphics[scale=.5]{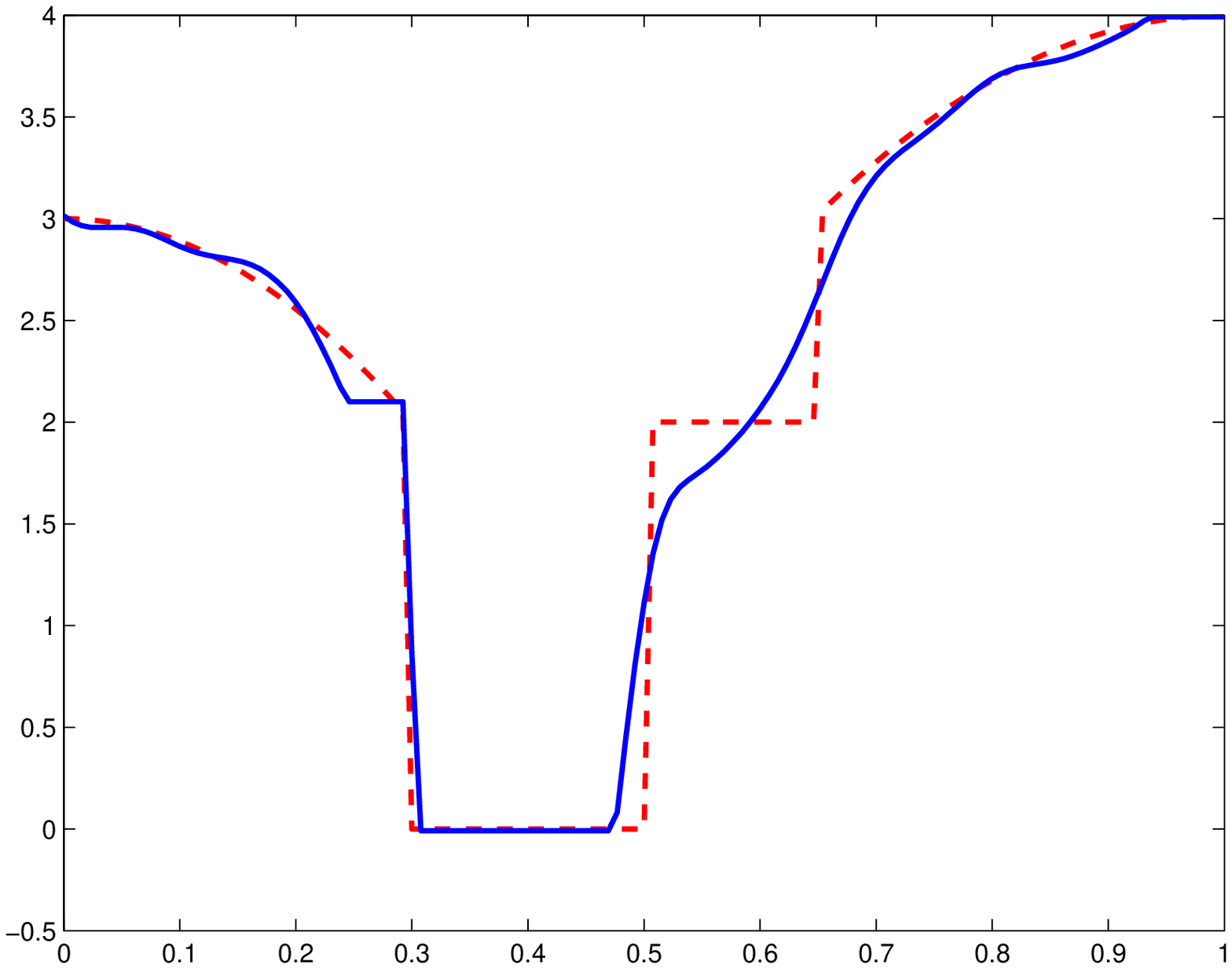}
\caption{Original signal ({\color{red}{- -}}) and regularized solution ({\textcolor[rgb]{0.00,0.50,0.00}{---}}) obtained with the combined $L^2-BV$ method and function $\theta$ showed in Fig. \ref{fig:9}.}
\label{fig:10}
\end{center}
\end{figure}
In table \ref{tab:2} the values of the ISNR for the four restorations are presented. These values show once again a significant improvement of the combined method with respect to any of the pure single methods.
\begin{table}[H]
\caption{ISNR's for Example 3.2.}
\smallskip
\label{tab:2}       
%
%
\hskip 2cm
\begin{tabular}{|p{11cm}|p{1.5cm}|}
\hline
\textbf{Regularization Method} & \textbf{ISNR} \\
\hline
Tikhonov-Phillips of order zero & 2.6008\\
\hline
Bounded variation seminorm & 2.8448\\
\hline
Mixed $L^2-$BV method with binary $\theta$ & 4.8969\\
\hline
Mixed $L^2-$BV method with zero-order Tikhonov-based $\theta$  & 4.3315\\
\hline
\end{tabular}
\end{table}

%
%

\section{Conclusions}

In this article we introduced a new generalized Tikhonov-Phillips regularization method in which the penalizer
is given by a spatially varying combination of the $L^2$ norm and of the bounded variation seminorm. For particular cases, existence and uniqueness of global minimizers of the corresponding functionals were shown. Finally, applications of the new method to signal restoration problem were shown.

Although these preliminary results are clearly quite promising, further research is needed. In particular, the choice or construction
of a weight function $\theta(t)$ in a somewhat optimal way is a matter which undoubtedly deserves much further attention and study.
Research in these directions is currently under way.


\section*{Acknowledgments}
This work was supported in part by Consejo Nacional de Investigaciones Cient\'{\i}ficas y T\'{e}cnicas, CONICET, through PIP
2010-2012 Nro. 0219, by Agencia Nacional de Promoci\'{o}n Cient\'{\i}fica y Tecnol\'{o}gica, ANPCyT, through project PICT 2008-1301, by Universidad Nacional del Litoral, through projects CAI+D 2009-PI-62-315, CAI+D PJov 2011 Nro. 50020110100055, CAI+D PI 2011 Nro. 50120110100294 and by the Air Force Office of Scientific Research, AFOSR, through Grant FA9550-10-1-0018.


\bibliographystyle{amsplain}
\bibliography{ref1}

\end{document}